\documentclass[12pt]{amsart}

\setlength{\textwidth}{\paperwidth}
\addtolength{\textwidth}{-2in}
\calclayout

\usepackage[all,cmtip]{xy}
\let\objectstyle=\displaystyle

\usepackage[utf8]{inputenc}
\usepackage{amsmath}
\usepackage{amssymb}
\usepackage{amsthm}
\usepackage{amsrefs}
\usepackage{float}
\usepackage{longtable}
\usepackage{tikz}
\usepackage{bookmark}
\usepackage{hyperref}
\usepackage[capitalise]{cleveref}
\usepackage[foot]{amsaddr}

\newtheorem{theorem}{Theorem}[section]
\newtheorem{lemma}[theorem]{Lemma}
\newtheorem{proposition}[theorem]{Proposition}
\newtheorem{corollary}[theorem]{Corollary}
\newtheorem{remark}[theorem]{Remark}
\newtheorem{definition}[theorem]{Definition}

\newcommand{\F}{\mathbb{F}}
\newcommand{\N}{\mathbb{N}}
\newcommand{\Z}{\mathbb{Z}}
\newcommand{\mc}[1]{\mathcal{#1}}
\newcommand{\Sq}{\tilde{\mathcal{S}}_q}
\newcommand{\Seight}{\tilde{\mathcal{S}}_8}
\newcommand{\X}{\mathcal{X}}
\newcommand{\Mod}[1]{\ (\mathrm{mod}\ #1)}

\title{Two-point AG codes from one of the Skabelund maximal curves}

\author[Leonardo Landi]{Leonardo Landi$^1$} \address{$^1$Independent researcher - ORCID 0000-0003-4835-5321} 
\author[Marco Timpanella]{Marco Timpanella$^2$} \address{$^2$Dipartimento di Matematica e Informatica, Universit\`{a} degli Studi di Perugia, Perugia, 06100, Italy - ORCID 0000-0001-6058-1909} \email{marco.timpanella@unipg.it} \thanks{}
\author[Lara Vicino]{Lara Vicino$^3$} \address{$^3$Department of Applied Mathematics and Computer Science, Technical University of Denmark, Kongens Lyngby 2800, Denmark - ORCID 0000-0002-7480-8681}  \email{lavi@dtu.dk} \thanks{}

\date{}

\begin{document}

\begin{abstract}
In this paper, we investigate two-point Algebraic Geometry codes associated to the Skabelund maximal curve constructed as a cyclic cover of the Suzuki curve. In order to estimate the minimum distance of such codes, we make use of the generalized order bound introduced by P. Beelen and determine certain two-point Weierstrass semigroups of the curve.
\end{abstract}

\maketitle

\vspace{0.5cm}\noindent {\em Keywords}: AG codes, Skabelund curve, order bound, two-point code, two-point Weierstrass semigroup

\vspace{0.2cm}\noindent{\em MSC}: Primary: 11G20. Secondary: 11T71, 94B27, 14H50, 14G50

\vspace{0.2cm}\noindent

\section{Introduction}

Let $\mathbb{F}_{q}$ be a finite field with $q$ elements, where $q$ is a power of a prime $p$. A projective, absolutely irreducible, non-singular, algebraic curve $\X$ defined over $\F_q$ is called $\F_q$\emph{-maximal} if the number $|\X(\F_q)|$ of its $\F_q$-rational points attains the Hasse-Weil upper bound
$$
|\X(\F_q)| \le q + 1 + 2g\sqrt{q},
$$
where $g$ is the genus of the curve.  Notable examples of maximal curves include the Hermitian, Suzuki, and Ree curves, collectively known as Deligne-Lusztig curves. The Kleiman-Serre covering result plays a key role in constructing new examples of maximal curves. It states that
 any curve defined over $\F_q$ and $\F_q$-covered by an $\F_q$-maximal curve is itself $\F_q$-maximal. Actually, apart from the Deligne-Lusztig curves, most of the known $\F_{q^2}$-maximal curves are obtained, following this approach, from the Hermitian curve. Initially, there was speculation that all maximal curves could be covered by the Hermitian curve. However, this was proven false in 2009 by M. Giulietti and G. Korchm\'{a}ros \cite{GK}, who constructed an $\F_{q^6}$-maximal curve not covered by the Hermitian curve when $q>2$. This curve, known as the GK curve, was constructed as a Galois cover of the Hermitian curve. Since then, a few other examples of maximal curves not covered by the Hermitian curve have been provided \cite{BM,ggs10,TTT}. Reproducing the way in which the GK curve was constructed from the Hermitian curve, in \cite{S17} D. Skabelund constructed new infinite families of maximal curves as cyclic covers of the two other Deligne-Lusztig curves. In this paper, we deal with the Skabelund $\F_{q^4}$-maximal curve $\Sq$ arising from the Suzuki curve.

Maximal curves are interesting objects in their own right, but also for their applications in Coding Theory. The essential idea going back to Goppa's work \cite{Goppa1} is that a linear code (the so called Algebraic Geometry code) can be obtained from an algebraic curve $\X$ defined over $\mathbb{F}_q$ by evaluating certain rational functions, whose poles are prescribed by a given $\mathbb{F}_q$-rational divisor $G$, at some $\mathbb{F}_q$-rational divisor $D$ whose support is disjoint from that of $G$. Algebraic Geometry codes (AG codes for short) are proven to have good performances, provided that $\mathcal{X}$, $G$ and $D$ are carefully chosen in an appropriate way. In particular, as the relative Singleton defect of an AG code from a curve $\X$ is upper bounded by the ratio $g/N$, where $g$ is the genus of $\X$ and $N$ can be as large as the number of $\mathbb F_q$-rational points of $\X$, it follows that curves with many rational points with respect to their genus are of great interest in Coding Theory. For this reason, AG codes from maximal curves have been widely investigated in the last years, see for instance \cite{BMZ2, BMZ1,KNT,TC}. Regarding the choice of the two divisors $D$ and $G$, the latter is typically taken to have support at one or two points of $\X$. Such codes are known as one-point codes and two-point codes, respectively, and have been extensively investigated, see for instance \cite{ CT,LT,GF,LV22,TwoPointsSuzuki,MTZ}. 

In this paper, we investigate duals of two-point codes from the curve $\Sq$. In order to estimate their minimum distance, we make use of the generalized order bound introduced by P. Beelen in \cite{OrderBound}. This bound constitutes a generalization to multi-point AG codes of the order bound for one-point AG codes (see \cite{AGcodes}) and has been further improved in \cite{DK09} and \cite{DP10}.

The paper is organized as follows. Section \ref{Prel} provides a review of the relevant results on AG codes, the generalized order bound, and the curve $\Sq$. Section \ref{sec:two_point_semigroup} is devoted to the computation of a two-point Weierstrass semigroup of $\Sq$. Finally, in Section \ref{sec:results}, we analyze the performance of the constructed codes. Notably, in several cases, the order bound offers a more accurate estimate of the minimum distance compared to the Goppa bound. Moreover, the minimum distance of all the two-point codes constructed is always at least that of the one-point code of the same dimension. 

\section{Preliminaries}\label{Prel}

In this section, we start by introducing the notations and terminology that we use for AG codes, see \cite{Stichtenoth}*{Chapter 2} for further details. Moreover, we recall the definition of the generalized order bound and the related results that we use in order to estimate the minimum distance of the two-point codes studied in the paper. A thorough explanation of these results can be found in \cite{TwoPointCodes} and \cite{OrderBound}. In particular, a detailed description of the algorithm used for estimating the distance of the codes can be found in \cite{LV22}*{Section 4}.
Finally, we recall the definition and some results regarding the Skabelund curve $\Sq$. 

Throughout this section, let $q = p^h$, for $p$ a prime number and $h>0$ an integer. Let $\mathcal{X}$ be a projective absolutely irreducible non-singular algebraic curve defined over the finite field $\mathbb{F}_q$ of genus $g(\mathcal{X})$. We denote as $\mathbb{F}_q(\mathcal{X})$ the function field of $\X$ and as $\X(\F_q)$ the set of $\F_q$-rational points of $\X$. 

\subsection{Algebraic Geometry codes}
 
Let $P_1, \dots, P_n\in \mathcal{X}(\F_q)$ be distinct points and let $D$ be the divisor $D := P_1 + \dots + P_n$. Moreover, choose $G$ to be another divisor on $\X$ with support disjoint from the support of $D$. Then,
the AG code $C_L(D, G)$ is defined as 
$$ C_L(D, G) := \{ (f(P_1), \dots, f(P_n)) \mid f \in L(G) \},$$
where 
$$ L(G) := \{ f \in \mathbb{F}_q(\mathcal{X})\setminus \{0\} \mid (f) + G \geq 0 \} \cup \{ 0 \}$$
is the Riemann-Roch space associated to the divisor $G$.
The code $C_L(D, G)$ is a linear subspace of $\mathbb{F}_q^n$ of dimension $\mathrm{dim}(C_L(D, G)) = \mathrm{dim}(L(G)) - \mathrm{dim}(L(G-D))$ and its minimum distance $d$ satisfies the Goppa bound $d \geq n - \mathrm{deg}(G)$. 

The dual code $C_L(D, G)^\perp$ of $C_L(D, G)$ is the orthogonal complement of $C_L(D, G)$ in $\F_{q}^n$, hence it is a linear subspace of $\mathbb{F}_q^n$ of dimension $n - \mathrm{dim}(C_L(D, G))$. It can be shown that the dual of an AG code is itself an AG code (see \cite{Stichtenoth}*{Proposition 2.2.10}), hence the Goppa bound gives also a lower bound for the minimum distance $d^\prime$ of $C_L(D, G)^\perp$, namely $d^\prime \geq \mathrm{deg}(G) - 2g(\mathcal{X}) + 2$.

An AG code $C_L(D, G)$ is said to be a two-point code if the support of the divisor $G$ consists of precisely two points $Q_1$ and $Q_2$, namely $G = aQ_1 + bQ_2$ for some $a, b \in \mathbb{Z}\setminus \{0\}$. It does not necessarily hold that the dual of a two-point code $C_L(D, G)$ is itself a two-point code. However, since the literature on the subject typically refers also to duals of two-point codes just as \emph{two-point codes}, throughout the paper we will generally adopt the same slightly abusive terminology.

\subsection{The generalized order bound}
\label{subsec:orderbound}
Since the Goppa bound for the minimum distance of a code $C_L(D, G)^\perp$ is often not sharp,  more refined bounds for the dual minimum distance of AG codes have been introduced in the literature. In this paper, we will make use of the generalized order bound introduced by P. Beelen in \cite{OrderBound}, which constitutes a generalization of the Feng-Rao bound for one-point AG codes (see \cite{AGcodes}). Further improvements to the generalized order bound can be found in \cite{DK09} and \cite{DP10}. 
For a detailed survey on the relations between other bounds and the generalized order bound introduced in \cite{OrderBound}, we refer to \cite{DKP11}. 

We now recall the definition of the generalized order bound and discuss its use to practically estimate the minimum distance of two-point codes. We start with the following definition. 
\begin{definition}
\label{def:Gnongaps}
Let $G$ be a divisor on $\X$. For $R\in \X(\F_q)$, the set of \textit{$G$-non-gaps at $R$} (see \cite{ConsecutiveGaps}) is defined as  
\begin{align*}
  H(R; G) & := \left\{ -v_R(f) \ \Biggl\lvert \ f \in \bigcup_{i=-\mathrm{deg}(G)}^{\infty}L(G +  iR) \setminus \{ 0 \}\right\}.
\end{align*}
Moreover, define
\begin{align*}
  N(R; G) & := \{ (i, j) \in H(R; 0) \times H(R; G) \mid i+j = v_R(G)+1 \}, \\
  \nu(R; G) & := \# N(R; G),
\end{align*}
where $v_R(G)$ denotes the coefficient of $R$ in the divisor $G$.
\end{definition}

\begin{remark}
    From Definition \ref{def:Gnongaps}, it is immediate to see that
    \begin{itemize}
        \item $H(R; G + R) = H(R; G)$
        \item $H(R; 0)$ is the Weierstrass semigroup $H(R)$ at $R$.
    \end{itemize}  
\end{remark}

\begin{definition}[\cite{OrderBound}*{Definition 6}]
  \label{def:orderbound}
  Let $D$ be a divisor that is a sum of $n$ distinct rational points of $\mathcal{X}$ and $G$ a divisor on $\mathcal{X}$ such that all the points in its support are rational. Further, suppose that the support of $G$ is disjoint from the support of $D$. For any infinite sequence $S = R_1, R_2,\ldots$ of points of $\mathcal{X}\setminus \mathrm{supp}(D)$ define
  $$ d_S(G) := \mathrm{min}\{\nu(R_{i+1}; G + R_1 + \cdots + R_i)\}, $$
  where the minimum is taken over all $i\geq 0$ such that $L(G + R_1 + \cdots + R_i) \neq L(G + R_1 + \cdots + R_{i+1})$. Moreover, define 
  $$ d(G):=\mathrm{max} \ d_S(G), $$
  where the maximum is taken over all infinite sequences $S$ of points having entries in $\mathcal{X}\setminus \mathrm{supp}(D)$.
\end{definition}

The following proposition, a consequence of \cite{OrderBound}*{Theorem 7}, ensures that the integer $d(G)$ is in fact a lower bound for the minimum distance $d^\prime$ of the code $C_L(D, G)^\perp$.

\begin{proposition}
  \label{prop:orderbound}
  Let $C_L(D, G)$ be an AG code with $D$ and $G$ as in Definition \ref{def:orderbound}. Then the minimum distance $d^\prime$ of the dual code $C_L(D, G)^\perp$ satisfies the inequality $d^\prime \geq d(G)$.
\end{proposition}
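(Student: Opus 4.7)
The plan is to derive the proposition as a straightforward consequence of \cite{OrderBound}*{Theorem 7}, using the standard residue description of dual AG codes to translate weights into statements about differentials, and then invoking Beelen's theorem for each fixed sequence $S$ before maximizing.

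First, I would recall that $C_L(D,G)^\perp = C_\Omega(D,G)$, i.e.\ every nonzero codeword $c$ of $C_L(D,G)^\perp$ can be written as $c = (\mathrm{res}_{P_1}(\omega),\ldots,\mathrm{res}_{P_n}(\omega))$ for some differential $\omega$ on $\X$ with $(\omega) \geq G - D$ (cf.\ \cite{Stichtenoth}*{Proposition 2.2.10}). The weight $w(c)$ then equals the number of points $P_i \in \mathrm{supp}(D)$ at which $\omega$ has a nonzero residue, so a lower bound on $d^\prime$ is equivalent to a lower bound on the support size of such a residue vector.

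Next, I would fix an arbitrary infinite sequence $S = R_1, R_2, \ldots$ of points in $\X \setminus \mathrm{supp}(D)$ and apply \cite{OrderBound}*{Theorem 7}: for every nonzero codeword $c \in C_L(D,G)^\perp$, one has $w(c) \geq d_S(G)$. The mechanism behind Theorem 7 is an iterative Feng--Rao-type pairing argument, executed at each index $i \geq 0$ for which
\[
L(G + R_1 + \cdots + R_i) \subsetneq L(G + R_1 + \cdots + R_{i+1}).
\]
At such an index, the pairs $(i^\prime,j^\prime) \in N(R_{i+1}; G + R_1 + \cdots + R_i)$ produce $\nu(R_{i+1}; G + R_1 + \cdots + R_i)$ linearly independent residue conditions that a nonzero $\omega$ supported at a small subset of $\{P_1,\ldots,P_n\}$ cannot simultaneously satisfy, forcing $w(c) \geq \nu(R_{i+1}; G + R_1 + \cdots + R_i)$. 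Taking the minimum over all such $i$ gives $w(c) \geq d_S(G)$.

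Finally, since the inequality $w(c) \geq d_S(G)$ holds uniformly in $S$, I would take the supremum on the right to conclude $w(c) \geq \max_S d_S(G) = d(G)$, and hence $d^\prime \geq d(G)$. The main obstacle is not in this outer argument, which is essentially a tautology once the correct sequence is selected, but in the internal machinery of Beelen's Theorem 7: namely, producing, at each growth step, the promised $\nu$ independent pairings between functions in $L(G + R_1 + \cdots + R_i)$ and functions in $L(kR_{i+1})$ whose residue sums at the $P_j$ vanish. Because that technical content is already carried out in \cite{OrderBound}, I would cite it as a black box, so the proof of the proposition reduces to the three lines above.
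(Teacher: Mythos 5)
Your proposal is correct and matches the paper's treatment: the paper gives no separate proof of this proposition, stating only that it is a consequence of \cite{OrderBound}*{Theorem 7}, which is exactly the black-box citation you make after fixing a sequence $S$ and maximizing. The additional exposition you give on the residue description of the dual code and the Feng--Rao pairing mechanism is accurate background but not part of the paper's argument.
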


We hence refer to $d(G)$ as the \emph{generalized order bound} (or, for short, simply as the \textit{order bound}) for the minimum distance of $C_L(D, G)^\perp$.

\begin{remark}
\label{rem:restrictions}
    It is not difficult to see (\cite{LV22}*{Lemma 2.3}) that $d(G)\geq \mathrm{deg}(G)-2g(\X)+2$ and that equality holds if $\mathrm{deg}(G) \geq 4g(\X) - 1$. This observation is useful for practical purposes. 
In fact, only the ?rst $4g-1-\deg(G)$ entries from a sequence $S$ are relevant to de?ne $d(G)$, although, for practical reasons, one needs to select a finite number of entries, at the cost of obtaining a possibly worse bound.

In order to compute the order bound $d(G)$ for the minimum distance of the two-point code $C_L(D, aQ_1 + bQ_2)^\perp$ with $Q_1,Q_2\in \X(\F_q)$ distinct points, $a,b\in \mathbb{Z}\setminus \{0\}$ and $D$ the sum of all the points in $\X(\F_q)\setminus\{Q_1,Q_2\}$, we will consider sequences $S = R_1, R_2, \dots$ of length at most $4g(\X)-1-\mathrm{deg}(G)$ and $R_i \in \{ Q_1, Q_2 \}$ for all $i \geq 1$.
\end{remark}

With slight abuse of notation, although applying the restrictions discussed in Remark \ref{rem:restrictions},
we will continue to denote the bound with $d(G)$ and to refer to it as the \textit{order bound}. Note that this choice does not affect the statement of Proposition \ref{prop:orderbound}.

In the case of a two-point code $C_L(D, aQ_1 + bQ_2)$, with notations and constraints as in Remark \ref{rem:restrictions}, the fundamental ingredient for the computation of the order bound for the minimum distance of $C_L(D, aQ_1 + bQ_2)^\perp$ is then an efficient way for computing the integers 
\begin{alignat*}{2}
  \nu(Q_1; aQ_1 + bQ_2) & = \# \{ (i, j) \in H(Q_1) \times H(Q_1; bQ_2) \mid i+j = a+1 \}, \\
  \nu(Q_2; aQ_1 + bQ_2) & = \# \{ (i, j) \in H(Q_2) \times H(Q_2; aQ_1) \mid i+j = b+1 \}.
\end{alignat*}
Therefore, we recall here a useful method for computing the sets $H(Q_1; bQ_2)$ and $H(Q_2; aQ_1)$, which has also been used in \cite{TwoPointCodes} and \cite{LV22}. We start by giving the definition of two-point Weierstrass semigroup, as studied in \cite{BT06}.

\begin{definition}
\label{def:ringregular}
Denote with $\mathcal{R}(Q_1, Q_2)$ the ring of functions in $\mathbb{F}_q(\mathcal{X})$ that are regular outside $Q_1$ and $Q_2$, namely
$$ \mathcal{R}(Q_1, Q_2) := \{ f \in \mathbb{F}_q(\mathcal{X}) \mid v_R(f) \geq 0 \; \forall R \neq Q_1, Q_2 \}. $$
The two-point Weierstrass semigroup $H(Q_1, Q_2)$ is defined as the set
$$ H(Q_1, Q_2) := \{ (i, j) \in \mathbb{Z}^2 \mid \exists f \in \mathcal{R}(Q_1, Q_2) \setminus \{0\}, v_{Q_1}(f) = -i, v_{Q_2}(f) = -j \}. $$
\end{definition}
Furthermore, the \textit{period} $\pi$ of the two-point Weierstrass semigroup $H(Q_1, Q_2)$ is defined to be
$$ \pi := \min \{ k \in \mathbb{N} \setminus \{ 0 \} \mid k(Q_1 - Q_2) \; \text{is a principal divisor} \}. $$
With these definitions in place, we finally define the following function:
\begin{alignat*}{2}
  \tau_{Q_1, Q_2} : \; & \mathbb{Z} && \longrightarrow \mathbb{Z} \\
  & i && \longmapsto \min \{ j \mid (i, j) \in H(Q_1, Q_2) \}.
\end{alignat*}
More details on the map $\tau_{Q_1, Q_2}$ can be found in \cite{BT06}*{Proposition 14, Proposition 17}. However, for the convenience of the reader, in the following proposition we summarize some of its most important properties.
\begin{proposition}[\cite{BT06}*{Proposition 14, Proposition 17}]
  \label{prop:properties_tau}
  Let $\pi$ be the period of the two-point Weierstrass semigroup $H(Q_1, Q_2)$ and $g := g(\mathcal{X})$ be the genus of $\mathcal{X}$. Then:
  \begin{enumerate}
  \item $\tau_{Q_1, Q_2}$ is bijective, with inverse map $\tau^{-1}_{Q_1, Q_2} = \tau_{Q_2, Q_1}$;
  \item $-i \leq \tau_{Q_1, Q_2}(i) \leq 2g - i$ for all $i \in \mathbb{Z}$;
  \item $\tau_{Q_1, Q_2}(i + \pi) = \tau_{Q_1, Q_2}(i) - \pi$;
  \item $\sum_{i=c}^{\pi+c-1} (i + \tau_{Q_1, Q_2}(i)) = \pi g$ for all $c \in \mathbb{Z}$.
  \end{enumerate}
\end{proposition}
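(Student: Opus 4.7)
The plan is to base everything on one elementary manipulation: if $f_1, f_2 \in \mathcal{R}(Q_1, Q_2) \setminus \{0\}$ share the same exact pole order $j$ at $Q_2$, then there is a unique scalar $c \in \F_q^{\times}$ matching the leading Laurent coefficients of $f_1$ and $f_2$ at $Q_2$, so that $v_{Q_2}(f_1 - c f_2) \geq -j + 1$. This cancellation trick, combined with Riemann--Roch, drives all four items.

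For (1), I would first prove injectivity of $\tau := \tau_{Q_1,Q_2}$: if $\tau(i) = \tau(i') = j$ with $i > i'$ and corresponding realizers $f, f'$, then $f - cf'$ has $v_{Q_1} = -i$ (since $-i < -i'$) and $v_{Q_2} > -j$, producing $(i, j'') \in H(Q_1, Q_2)$ with $j'' < j$, contradicting the minimality defining $\tau(i)$. The identity $\tau_{Q_2, Q_1} \circ \tau = \mathrm{id}$ then follows from the trivial inequality $\tau_{Q_2, Q_1}(\tau(i)) \leq i$ (since $(\tau(i), i) \in H(Q_2, Q_1)$ via the same realizer), as strict inequality would, by the same cancellation applied with the roles of $Q_1$ and $Q_2$ swapped, again contradict the minimality of $\tau(i)$. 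For (2), the lower bound $\tau(i) \geq -i$ follows from $\deg((f)) = 0$ applied to any realizer $f$, which has no poles outside $\{Q_1, Q_2\}$. The upper bound comes from Riemann--Roch: $iQ_1 + (2g-i)Q_2$ and $(i-1)Q_1 + (2g-i)Q_2$ are non-special of dimensions $g+1$ and $g$, producing a function with exact $Q_1$-pole order $i$ and $Q_2$-pole order at most $2g-i$. For (3), let $h \in \F_q(\X)$ satisfy $(h) = \pi(Q_1 - Q_2)$, guaranteed by the definition of $\pi$; multiplying a realizer of $(i, j)$ by $h^{\pm 1}$ produces realizers of $(i \mp \pi, j \pm \pi)$, and the resulting pair of inequalities combines into equality.

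The main obstacle is (4). Independence of $c$ is immediate from (3): incrementing $c$ drops the term at $i = c$ and adds the term at $i = c + \pi$, which are equal. To pin down the value, the plan is to first establish the identity
\[
\dim L(aQ_1 + bQ_2) = \#\{i \leq a : \tau(i) \leq b\},
\]
by analyzing the filtration $L((i-1)Q_1 + bQ_2) \subset L(iQ_1 + bQ_2)$ and using the cancellation trick to show that each codimension is $0$ or $1$, and is $1$ precisely when $\tau(i) \leq b$. Take $a = c + \pi - 1$ and $b$ large enough that $a + b \geq 2g - 1$; Riemann--Roch then forces $\dim L = a + b + 1 - g$. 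Using (3) to write each $i$ in the effective range $[-b, a]$ uniquely as $i = i_0 - \ell \pi$ with $i_0 \in [c, c + \pi - 1]$ and $\ell \geq 0$, the count rearranges to $\sum_{i_0}(\lfloor (b - \tau(i_0))/\pi \rfloor + 1)$. The subtle final step, which I expect to be the pinch point, is to observe that by (1) and (3) the residues $\tau(i_0) \bmod \pi$ exhaust $\{0, 1, \ldots, \pi - 1\}$ as $i_0$ ranges over one period, so $\sum_{i_0}[(b - \tau(i_0)) \bmod \pi] = \pi(\pi - 1)/2$; comparing with Riemann--Roch then yields $\sum_{i_0} \tau(i_0) = \pi g - \pi c - \pi(\pi - 1)/2$, and adding $\sum_{i_0} i_0 = \pi c + \pi(\pi - 1)/2$ gives $S = \pi g$.
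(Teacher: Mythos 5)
The paper does not actually prove this proposition: it is imported verbatim from \cite{BT06}*{Propositions 14 and 17}, so there is no internal argument to compare yours against. Your plan is a correct self-contained derivation. Items (1)--(3) are sound: leading-coefficient cancellation at $Q_2$ gives both injectivity and the identity $\tau_{Q_2,Q_1}\circ\tau_{Q_1,Q_2}=\mathrm{id}$ (with the other composition by symmetry); degree zero of a principal divisor gives the lower bound in (2); non-speciality of $iQ_1+(2g-i)Q_2$ and $(i-1)Q_1+(2g-i)Q_2$ gives the upper bound; and multiplying realizers by a function with divisor $\pi(Q_1-Q_2)$ gives the two inequalities that combine into (3). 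For (4), your route passes through the dimension formula $\dim L(aQ_1+bQ_2)=\#\{i\le a:\tau_{Q_1,Q_2}(i)\le b\}$, which is exactly \cref{theorem:dim_LG} of the paper (itself cited from \cite{TwoPointCodes}); the one-step filtration argument proves it correctly, and the subsequent bookkeeping checks out: with $b\ge 2g-c$ every term in $\sum_{i_0}\bigl(\lfloor(b-\tau_{Q_1,Q_2}(i_0))/\pi\rfloor+1\bigr)$ is present, and the pinch point you identified --- that the residues $\tau_{Q_1,Q_2}(i_0)\bmod\pi$ are pairwise distinct over a window of length $\pi$ --- does follow from (1) and (3), since $\tau_{Q_1,Q_2}(i_0)\equiv\tau_{Q_1,Q_2}(i_0')\pmod\pi$ forces $\tau_{Q_1,Q_2}(i_0')=\tau_{Q_1,Q_2}(i_0-k\pi)$ and hence $i_0\equiv i_0'\pmod\pi$ by injectivity. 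Comparing with Riemann--Roch then yields $\sum(i_0+\tau_{Q_1,Q_2}(i_0))=\pi g$ as you compute. In writing this up, the only additions I would insist on are the well-definedness of $\tau_{Q_1,Q_2}$ (the set $\{j:(i,j)\in H(Q_1,Q_2)\}$ is nonempty by the same Riemann--Roch step as in (2), and bounded below by $-i$) and an explicit proof of the distinct-residues claim.
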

Moreover, as already observed in \cite{LV22}*{Proposition 2.7}, once the map $\tau_{Q_1, Q_2}$ is known, for all $j \in \mathbb{Z}$ the inverse map $\tau^{-1}_{Q_1, Q_2}(j)$ can also be directly and efficiently computed.

As the two following results show, the knowledge of the function $\tau_{Q_1, Q_2}$ is particularly useful for the computation of the order bound for a code $C_L(D, aQ_1 + bQ_2)^\perp$, since it allows the determination of the dimension of $L(aQ_1 + bQ_2)$, $a, b\in \mathbb{N}$, and provides an explicit expression for the set of $G$-non-gaps at $Q_1$ and the set of $G$-non-gaps at $Q_2$.

\begin{theorem}[\cite{TwoPointCodes}*{Theorem 9}]
  \label{theorem:dim_LG}
  Let $G = aQ_1 + bQ_2$ with $a, b \in \mathbb{N}$. The Riemann-Roch space $L(G)$ has dimension $|\{i \leq a \mid \tau_{Q_1, Q_2}(i) \leq b \}|$.
\end{theorem}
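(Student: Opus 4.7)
The plan is to compute the dimension of $L(G)$ by exhibiting a natural basis indexed by the set $T := \{i \leq a \mid \tau_{Q_1, Q_2}(i) \leq b\}$. The key preliminary observation is the standard fact that for any finite-dimensional subspace $V$ of $\F_q(\mathcal{X})$ and any place $Q_1$, one has $\dim V = |\{-v_{Q_1}(f) \mid f \in V \setminus \{0\}\}|$. Indeed, functions having pairwise distinct pole orders at $Q_1$ are automatically linearly independent (the valuation of a linear combination equals the minimum valuation among the summands with nonzero coefficient), and conversely any basis of $V$ can be modified by Gaussian elimination to have pairwise distinct pole orders at $Q_1$. So the theorem reduces to proving the equality
\[
\{-v_{Q_1}(f) \mid f \in L(G) \setminus \{0\}\} \;=\; T.
\]

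For the inclusion $T \subseteq \{-v_{Q_1}(f)\}$, take any $i \in T$. By definition of $\tau_{Q_1,Q_2}$, the pair $(i, \tau_{Q_1,Q_2}(i))$ lies in $H(Q_1, Q_2)$, so there exists $f \in \mathcal{R}(Q_1, Q_2) \setminus \{0\}$ with $v_{Q_1}(f) = -i$ and $v_{Q_2}(f) = -\tau_{Q_1, Q_2}(i)$. Since $f$ is regular outside $\{Q_1, Q_2\}$, since $-v_{Q_1}(f) = i \leq a$, and since $-v_{Q_2}(f) = \tau_{Q_1, Q_2}(i) \leq b$, we have $(f) + G \geq 0$, hence $f \in L(G)$ and $-v_{Q_1}(f) = i$.

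For the reverse inclusion, take any $f \in L(G)\setminus\{0\}$ and set $i := -v_{Q_1}(f)$, $j := -v_{Q_2}(f)$. Because $\mathrm{supp}(G) \subseteq \{Q_1, Q_2\}$, the condition $(f)+G \geq 0$ forces $v_R(f)\geq 0$ for every $R \neq Q_1, Q_2$, so $f \in \mathcal{R}(Q_1, Q_2)$ and hence $(i,j) \in H(Q_1, Q_2)$. By $(f)+G\geq 0$ at $Q_1$ and $Q_2$ we get $i \leq a$ and $j \leq b$, and by the minimality in the definition of $\tau$ we have $\tau_{Q_1,Q_2}(i) \leq j \leq b$. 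Thus $i \in T$, completing the proof.

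There is no real obstacle here: the statement is essentially a bookkeeping translation between the Riemann-Roch space, the ring $\mathcal{R}(Q_1,Q_2)$, and the semigroup $H(Q_1,Q_2)$ via the function $\tau_{Q_1,Q_2}$. The only point requiring care is the dimension-counting lemma at the start, which must be invoked cleanly because $L(G)$ may contain functions with arbitrarily negative $-v_{Q_1}$ (i.e.\ high zeros at $Q_1$), but those values are already constrained to lie in $T$ by the bound $\tau_{Q_1,Q_2}(i)\geq -i$ from \cref{prop:properties_tau}(2).
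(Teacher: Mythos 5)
The paper does not prove this statement at all: it is imported verbatim from \cite{TwoPointCodes}*{Theorem 9}, so there is no internal proof to compare against. Your argument is correct and is essentially the standard proof of that cited result: the reduction of $\dim L(G)$ to counting the pole orders $-v_{Q_1}(f)$ is sound (distinct pole orders at the rational point $Q_1$ give linearly independent functions, so the set of pole orders is finite of size exactly $\dim L(G)$), and the two inclusions correctly translate membership in $L(aQ_1+bQ_2)$ into the conditions $i\le a$ and $\tau_{Q_1,Q_2}(i)\le b$ via the ring $\mathcal{R}(Q_1,Q_2)$ and the minimality defining $\tau_{Q_1,Q_2}$, with the finiteness of the index set $T$ guaranteed by $\tau_{Q_1,Q_2}(i)\ge -i$ from Proposition \ref{prop:properties_tau}.
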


\begin{corollary}[\cite{TwoPointCodes}*{Corollary 10}]
  \label{cor:tau-non-gaps}
  Let $G = aQ_1 + bQ_2$ with $a, b \in \mathbb{N}$. Then $H(Q_1; G) = \{ i \in \mathbb{Z} \mid \tau_{Q_1, Q_2}(i) \leq b \}$ and $H(Q_2; G) = \{ i \in \mathbb{Z} \mid \tau_{Q_1, Q_2}^{-1}(i) \leq a \}$.
\end{corollary}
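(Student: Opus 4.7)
My plan is to prove the two equalities by directly unfolding the definitions of $H(Q_1; G)$ and $H(Q_1, Q_2)$, observing that both sides describe the same set of vanishing orders at $Q_1$ among functions in $\mathcal{R}(Q_1, Q_2)$ whose pole order at $Q_2$ is bounded by $b$. This should be essentially a definition chase, so I do not expect any serious obstacle; the only point needing care is the admissible range of the shift parameter in the union defining $H(Q_1; G)$.

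First I would rewrite the condition defining $H(Q_1; G)$ using $G = aQ_1 + bQ_2$. A nonzero $f \in L(G + kQ_1) = L((a+k)Q_1 + bQ_2)$ for some $k \geq -\deg(G)$ is precisely a nonzero function regular outside $\{Q_1, Q_2\}$ with $v_{Q_2}(f) \geq -b$, while the condition $v_{Q_1}(f) \geq -(a+k)$ can always be met by taking $k$ sufficiently large. Hence $i \in H(Q_1; G)$ iff there exists $f \in \mathcal{R}(Q_1, Q_2) \setminus \{0\}$ with $v_{Q_1}(f) = -i$ and $-v_{Q_2}(f) \leq b$, which by \cref{def:ringregular} and the definition of $\tau_{Q_1,Q_2}$ is equivalent to $\tau_{Q_1, Q_2}(i) \leq b$. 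This yields the first equality.

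The second equality then follows by the same argument with $Q_1$ and $Q_2$ swapped, giving $H(Q_2; G) = \{i \mid \tau_{Q_2, Q_1}(i) \leq a\}$, and then invoking \cref{prop:properties_tau}(1) to replace $\tau_{Q_2, Q_1}$ with $\tau_{Q_1, Q_2}^{-1}$. The only subtlety to double-check is that the restriction $k \geq -\deg(G) = -(a+b)$ in the union does not exclude any legitimate $i$; this is resolved by \cref{prop:properties_tau}(2), which guarantees $i \geq -\tau_{Q_1, Q_2}(i) \geq -b$ as soon as $\tau_{Q_1, Q_2}(i) \leq b$, so a suitable $k$ in the required range always exists.
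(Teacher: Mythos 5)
Your argument is correct. Note, however, that the paper does not prove this statement at all: it is quoted verbatim from \cite{TwoPointCodes}*{Corollary 10}, where it is obtained as a consequence of the dimension formula recorded here as \cref{theorem:dim_LG}, via the observation that $i \in H(Q_1; G)$ precisely when $\dim L(iQ_1 + bQ_2) > \dim L((i-1)Q_1 + bQ_2)$, which by that theorem happens exactly when $\tau_{Q_1,Q_2}(i) \leq b$. You instead bypass the dimension count entirely and unfold \cref{def:Gnongaps} and \cref{def:ringregular} directly: the union $\bigcup_k L(G + kQ_1)$ is exactly the set of nonzero $f \in \mathcal{R}(Q_1,Q_2)$ with $-v_{Q_2}(f) \leq b$, and the existence of such an $f$ with $v_{Q_1}(f) = -i$ is equivalent to $\tau_{Q_1,Q_2}(i) \leq b$ by the very definition of $\tau_{Q_1,Q_2}$ as a minimum over $H(Q_1,Q_2)$. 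Both routes are sound; yours is more elementary and self-contained (it needs only \cref{prop:properties_tau}(1) for the second identity), while the route through \cref{theorem:dim_LG} packages the same content in a form that is reused for computing code dimensions. One small remark: the ``subtlety'' you flag about the lower limit $k \geq -\deg(G)$ is actually vacuous, since the spaces $L(G+kQ_1)$ are nested increasing in $k$, so the union is unchanged by where one starts; your appeal to \cref{prop:properties_tau}(2) is therefore unnecessary, though harmless.
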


Finally, a consequence of Theorem \ref{theorem:dim_LG} that has been explicitly discussed in \cite{LV22}*{Corollary 2.10} is that one can fully determine the two-point Weierstrass semigroup $H(Q_1, Q_2)$ as
$$H(Q_1, Q_2) = \{ (i, j) \in \mathbb{Z}^2 \mid \tau_{Q_1, Q_2}(i) \leq j, \tau_{Q_1, Q_2}^{-1}(j) \leq i \}.$$
This means that the knowledge of the function $\tau_{Q_1, Q_2}$ is equivalent to the knowledge of the full two-point semigroup $H(Q_1, Q_2)$. 

\subsection{The Skabelund curve \texorpdfstring{$\Sq$}{Sq}}
\label{sec:skabelund}

In this section, we introduce the Skabelund maximal curve $\Sq$, constructed by D. Skabelund in \cite{S17} as a Kummer cover of the Suzuki curve. Let $s \in \N$, $s \geq 1$ and define $q_0 := 2^s$, $q := 2q_0^2$ and $m := q - 2q_0 + 1$. The Skabelund curve $\Sq$ is defined by affine equations
\begin{equation}
  \label{eq:skabelund_curve}
  \Sq : \begin{cases}
    y^q + y = x^{q_0}(x^q + x), \\
    t^m = x^q + x.
  \end{cases}
\end{equation}
We hereby recall some of the main properties of $\Sq$. More details can be found in \cite{GMQZ18} and \cite{S17}.
\begin{proposition}
  \label{prop:properties_skabelund_curve}
  The Skabelund curve $\Sq$ is maximal over $\F_{q^4}$, has genus $g(\Sq) = \frac{1}{2} q (q-1)^2$ and $q^5 - q^4 + q^3 + 1$ $\F_{q^4}$-rational points. The full automorphism group of $\Sq$ acts on the set of $\F_{q^4}$-rational points of $\Sq$ with two short orbits; one is non-tame of size $q^2+1$ consisting of exactly all $\F_q$-rational points and the other is tame of size $q^5 - q^4 + q^3 - q^2$ consisting of all $\F_{q^4}$-rational points that are not $\F_q$-rational.
\end{proposition}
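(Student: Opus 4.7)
The plan is to carry out the proof in four steps, leveraging the construction of $\Sq$ as a degree-$m$ Kummer cover of the Suzuki curve $\mathcal{S}: y^q + y = x^{q_0}(x^q+x)$, which has known genus $g(\mathcal{S}) = q_0(q-1)$.

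First, I would compute $g(\Sq)$ by applying the Riemann-Hurwitz formula to the covering $\phi: \Sq \to \mathcal{S}$, $(x,y,t) \mapsto (x,y)$. Since this is a Kummer cover associated to the function $x^q + x$, the ramification locus consists exactly of the points where $x^q + x = 0$ or $\infty$; these are the $q^2$ affine $\F_q$-rational points of $\mathcal{S}$ together with the point at infinity. Each such point is totally ramified in the degree-$m$ cover, contributing $m-1$ to the different. Plugging into Riemann-Hurwitz gives $2g(\Sq) - 2 = m(2g(\mathcal{S}) - 2) + (q^2+1)(m-1)$, which simplifies to $g(\Sq) = \tfrac{1}{2}q(q-1)^2$ after substituting $m = q - 2q_0 + 1$ and $g(\mathcal{S}) = q_0(q-1)$.

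Second, for maximality over $\F_{q^4}$, I would invoke the Kleiman-Serre covering argument of Skabelund in \cite{S17}. The key input is that the Suzuki curve becomes $\F_{q^4}$-optimal in a precise sense: the Frobenius eigenvalues on $\mathrm{Jac}(\mathcal{S})$ satisfy relations forcing the zeta-function factorization, and the $m$-th roots introduced by $t^m = x^q+x$ lift this to produce a full set of Frobenius eigenvalues equal to $-q^2$ on $\mathrm{Jac}(\Sq)$. Hence $\Sq$ attains the Hasse-Weil bound over $\F_{q^4}$. As an immediate consequence, $|\Sq(\F_{q^4})| = q^4 + 1 + 2g(\Sq)\cdot q^2 = q^4 + 1 + q^3(q-1)^2 = q^5 - q^4 + q^3 + 1$, giving the rational point count.

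Finally, for the automorphism group and its short orbits, I would rely on the determination of $\mathrm{Aut}(\Sq)$ carried out in \cite{GMQZ18}: the group is generated by the lift to $\Sq$ of the Suzuki group $\mathrm{Sz}(q) \le \mathrm{Aut}(\mathcal{S})$, together with the cyclic group of order $m$ that acts on fibers of $\phi$ by $t \mapsto \zeta t$. The $q^2+1$ $\F_q$-rational points of $\Sq$ all lie over ramification points of $\phi$; they form a single orbit under $\mathrm{Sz}(q)$ of length $q^2+1$, which is non-tame since each point has $p$-part in its stabilizer coming from the Sylow $2$-subgroup of $\mathrm{Sz}(q)$. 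The remaining $\F_{q^4}$-rational points are unramified in $\phi$, and the combined action of the lifted $\mathrm{Sz}(q)$ and the cyclic cover group fuses them into one tame orbit of the expected size. The main obstacle is the orbit analysis, which is delicate because one must track how $\mathrm{Sz}(q)$ acts on the $m$ sheets above each base orbit; for the purposes of the present proposition I would simply cite the detailed analysis in \cite{GMQZ18} and \cite{S17}.
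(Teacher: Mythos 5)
The paper states this proposition without proof, simply recalling it from \cite{S17} and \cite{GMQZ18}, and your proposal ultimately defers to those same two references for the substantive claims (maximality, automorphism group, orbit structure), while your Riemann--Hurwitz computation of the genus (tame total ramification at the $q^2+1$ points where $x^q+x$ has valuation prime to $m$) and the resulting point count $q^4+1+2g(\Sq)q^2 = q^5-q^4+q^3+1$ are correct and standard. One caveat: maximality of $\Sq$ does \emph{not} follow from the Kleiman--Serre covering result, since $\Sq$ covers the Suzuki curve rather than being covered by a known maximal curve; Skabelund's actual argument is the ray-class-field/Frobenius-eigenvalue one you go on to describe, so the label ``Kleiman--Serre'' should be dropped from that step.
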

We denote with $\mc{O}$ the short orbit of $\Sq$ consisting of all the $\F_q$-rational points and with $P_\infty$ the unique point at infinity of $\Sq$, which belongs to the orbit $\mc{O}$.  Moreover, we denote with $P_{(a,b,c)}$ the point of $\Sq$ with affine $(x,y,t)$-coordinates $(a,b,c)$. In particular, for less cumbersome notations, in Section \ref{sec:two_point_semigroup} we simply denote as $P$ the point $P_{(0,0,0)}\in \mc{O} \setminus \{P_\infty\}$.

In the following proposition (see \cite[Section 2]{BLM21}), we recall the principal divisors on $\Sq$ of the coordinate functions and of the following two functions:
$$ z := x^{2q_0+1} + y^{2q_0}, \quad w := x y^{2q_0} + z^{2q_0}. $$

\begin{proposition}[\cite{BLM21}*{Section 2}]
The principal divisors on $\Sq$ of the functions $x,y,t,z,w$ are:
  \label{prop:principaldivisors}
  \begin{equation}
  \label{eq:divisors}
  \begin{aligned}
    (x) & = mP + E_x - (q^2 - 2q q_0 + q) P_\infty, \\
    (y) & = m(q_0+1)P + E_y - (q^2 - q q_0 + q_0) P_\infty, \\
    (t) & = \sum_{a^q+a \ = \ 0 \ \wedge \ b^q+b \ = \ 0}P_{(a,b,0)}- q^2P_\infty,\\
    (z) & = m(2q_0+1) P + E_z - (q^2 - q + 2q_0) P_\infty, \\
    (w) & = (q^2 + 1)(P - P_\infty),
  \end{aligned}
\end{equation}
where $E_x, E_y, E_z$ are effective divisors whose support does not contain $P$ and $P_\infty$.
\end{proposition}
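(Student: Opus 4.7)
My plan is to exploit the presentation of $\Sq$ as a Kummer cover $\varphi\colon \Sq \to \mathcal{S}$ of degree $m$ of the Suzuki curve $\mathcal{S}$, given by $t^m = x^q + x$. Since the functions $x, y, z, w$ are all polynomials in $x, y$, they descend to functions on $\mathcal{S}$, so their divisors on $\Sq$ are obtained by pulling back the corresponding divisors on $\mathcal{S}$; only $(t)$ requires a direct argument on $\Sq$.

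First I would collect the classical principal divisors on the Suzuki curve $\mathcal{S}$. Denoting by $P_\infty^{\mathcal{S}}$ its unique point at infinity and by $P_0^{\mathcal{S}} = (0,0)$ its origin, one has $v_{P_\infty^{\mathcal{S}}}(x) = -q$, $v_{P_\infty^{\mathcal{S}}}(y) = -(q+q_0)$, $v_{P_\infty^{\mathcal{S}}}(z) = -(q+2q_0)$ and $v_{P_\infty^{\mathcal{S}}}(w) = -(q+2q_0+1)$, with matching zero orders $1, q_0+1, 2q_0+1, q+2q_0+1$ at $P_0^{\mathcal{S}}$, while the remaining zeros of $x, y, z$ are supported at other affine $\F_q$-rational points of $\mathcal{S}$. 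These four pole orders are exactly the generators $q, q+q_0, q+2q_0, q+2q_0+1$ of the Suzuki Weierstrass semigroup at $P_\infty^{\mathcal{S}}$, and the zero orders at $P_0^{\mathcal{S}}$ follow from a direct local expansion using $y \sim x^{q_0+1}$ there. Since $w$ is a polynomial in $x, y$ its only pole is at $P_\infty^{\mathcal{S}}$, and comparing pole and zero degrees forces $(w)_\mathcal{S} = (q+2q_0+1)(P_0^{\mathcal{S}} - P_\infty^{\mathcal{S}})$.

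Next I would analyse the ramification of $\varphi$ via Kummer theory. Since $m = q - 2q_0 + 1$ is odd while $q$ is a power of $2$, and $v_Q(x^q+x) \in \{0, 1, -q^2\}$ at every $Q \in \mathcal{S}$ (the value $1$ being attained at each of the $q^2$ affine $\F_q$-rational points, $-q^2$ at $P_\infty^{\mathcal{S}}$, and $0$ elsewhere), the cover $\varphi$ is totally ramified over every $\F_q$-rational point of $\mathcal{S}$ and unramified elsewhere. In particular, $P$ and $P_\infty$ are the unique preimages of $P_0^{\mathcal{S}}$ and $P_\infty^{\mathcal{S}}$ respectively, both with ramification index $m$. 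Multiplying local valuations at ramified points by $m$, the pull-backs of the divisors of $x, y, z, w$ from $\mathcal{S}$ yield the claimed formulas on $\Sq$; for instance, $(w) = \varphi^*\bigl((q+2q_0+1)(P_0^{\mathcal{S}} - P_\infty^{\mathcal{S}})\bigr) = (q^2+1)(P - P_\infty)$. The divisor of $t$ then follows directly from $m(t) = \varphi^*((x^q+x))$ combined with the same ramification analysis, since the $q^2$ simple zeros of $x^q+x$ on $\mathcal{S}$ pull back to $m$ times their unique preimages in $\Sq$.

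The main obstacle is the Suzuki-side identity $v_{P_\infty^{\mathcal{S}}}(w) = -(q+2q_0+1)$, since a naive valuation bound applied to $w = xy^{2q_0} + z^{2q_0}$ yields only $v_{P_\infty^{\mathcal{S}}}(w) \geq -2q(q_0+1)$, much more negative than the claimed value: the two summands must almost entirely cancel. I would handle this by choosing a local uniformiser at $P_\infty^{\mathcal{S}}$ and expanding $w$ explicitly, using the Suzuki relation $y^q + y = x^{q_0}(x^q+x)$ to cancel successive leading terms of $xy^{2q_0}$ against those of $z^{2q_0}$ until a pole of order exactly $q + 2q_0 + 1$ is isolated; this is the classical fact that the fourth generator of the Suzuki Weierstrass semigroup is $q+2q_0+1$, realised precisely by $w$. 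With this cancellation argument in place, the rest of the proposition reduces to standard pull-back through the Kummer cover $\varphi$.
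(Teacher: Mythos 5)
The paper offers no proof of this proposition — it is quoted verbatim from \cite{BLM21}*{Section 2} — and your argument is correct and essentially reconstructs the one used there: the divisors of $x,y,z,w$ are obtained by pulling back the classical principal divisors on the Suzuki curve $\mathcal{S}$ through the degree-$m$ Kummer cover $t^m=x^q+x$, which is totally ramified exactly over the $q^2+1$ points of $\mathcal{S}(\F_q)$ because $m$ is odd and $v_Q(x^q+x)\in\{0,1,-q^2\}$, while $(t)$ follows from $m(t)=\varphi^*\bigl((x^q+x)\bigr)$; the arithmetic all checks (e.g.\ $mq=q^2-2qq_0+q$ and $m(q+2q_0+1)=q^2+1$ using $q=2q_0^2$), and the delicate valuation $v_{P_\infty^{\mathcal{S}}}(w)=-(q+2q_0+1)$ that you isolate is indeed the classical Hansen--Stichtenoth computation of the fourth generator of the Suzuki semigroup. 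The only blemish is your incidental claim that the remaining zeros of $z$ lie at affine $\F_q$-rational points of $\mathcal{S}$, which you do not justify and which is immaterial, since the proposition only requires $E_z$ to be effective with support away from $P$ and $P_\infty$, and any effective divisor on $\mathcal{S}$ avoiding $P_0^{\mathcal{S}}$ and $P_\infty^{\mathcal{S}}$ pulls back to one avoiding $P$ and $P_\infty$.
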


\section{The two-point Weierstrass semigroup \texorpdfstring{$H(P, P_\infty)$}{H(P, P_inf)}} 
\label{sec:two_point_semigroup}

In this paper, we are interested in using the order bound to compute a lower bound for the minimum distance of the AG codes $C_L(D, G)^\perp$ on $\Sq$ with
\begin{equation}
\label{eq:codedef}
    D:=\sum_{R\in \Sq(\F_{q^4})\setminus \{P,P_\infty\}}R \quad \mbox{and} \quad G:= aP + bP_\infty,
\end{equation}
for $a,b\in \mathbb{Z}_{>0}$.

To this aim, as pointed out in the final part of Section \ref{subsec:orderbound}, the purpose of this section is to explicitly determine the function $\tau_{P, P_\infty}$.

The points $P_\infty$ and $P$ lie in the same orbit under the action of the full automorphism group of $\Sq$, therefore $H(P)=H(P_\infty)$. The Weierstrass semigroup at every point of the Skabelund curve $\Sq$ is known (see \cite{BLM21}), hence we just recall the structure of $H(P_\infty)$ in the following proposition.  

\begin{proposition}[\cite{BLM21}]
  \label{prop:semigroups}
  The Weierstrass semigroup of $\Sq$ at $P_\infty$ is
  $$ H(P_\infty) = \langle q^2-2q q_0+q, q^2-q q_0 +q_0, q^2-q+2q_0, q^2, q^2+1 \rangle. $$
\end{proposition}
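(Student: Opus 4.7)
The proof splits naturally into two parts: an inclusion coming from the geometry of $\Sq$, and a numerical-semigroup count.

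For the first part, by Proposition \ref{prop:principaldivisors} the functions $x, y, z, t, w$ each have their unique pole at $P_\infty$, with pole orders $q^2-2qq_0+q$, $q^2-qq_0+q_0$, $q^2-q+2q_0$, $q^2$, and $q^2+1$ respectively. Hence each of these integers lies in $H(P_\infty)$, and writing $S$ for the numerical semigroup they generate, we obtain $S \subseteq H(P_\infty)$. Since $|\mathbb{N}\setminus H(P_\infty)| = g(\Sq) = \frac{q(q-1)^2}{2}$, the equality $S = H(P_\infty)$ will follow once $S$ is shown to have at most $\frac{q(q-1)^2}{2}$ gaps in $\mathbb{N}$.

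To carry out the gap count, I would exploit that $\Sq$ is a Kummer cover of the Suzuki curve $\mathcal{S}_q$ of degree $m := q-2q_0+1$ via $t^m = x^q + x$, which is totally ramified above $P_\infty^S$ and above the $q^2$ affine zeros of $x^q + x$. The Weierstrass semigroup of $\mathcal{S}_q$ at its point at infinity is $H(P_\infty^S) = \langle q, q+q_0, q+2q_0, q+2q_0+1 \rangle$, and a direct expansion yields
\begin{align*}
  m\cdot q &= q^2-2qq_0+q, & m\cdot(q+q_0) &= q^2-qq_0+q_0,\\
  m\cdot(q+2q_0) &= q^2-q+2q_0, & m\cdot(q+2q_0+1) &= q^2+1,
\end{align*}
so four of the five proposed generators are pullbacks of generators of $H(P_\infty^S)$ with pole orders multiplied by $m$. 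The remaining generator $q^2$ is the pole order of $t$ itself.

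Consequently, every element of $S$ has the shape $k q^2 + m s$ with $k \geq 0$ and $s \in H(P_\infty^S)$. I would then compute the Apéry set of $S$ with respect to $q^2$, noting that $(q^2+1)-q^2 = 1$ already shows every residue class modulo $q^2$ is hit by $S$; for each class one extracts the minimum of $m s$ modulo $q^2$ as $s$ ranges over $H(P_\infty^S)$. Applying the standard formula $|\mathbb{N}\setminus S| = \frac{1}{q^2}\sum_{w\in\mathrm{Ap}(S,q^2)} w - \frac{q^2-1}{2}$ reduces the gap count to a combinatorial sum over $H(P_\infty^S)$, and Riemann--Hurwitz applied to the degree-$m$ cover with $q^2+1$ totally ramified points provides the target value
$$ g(\Sq) \;=\; m\cdot g(\mathcal{S}_q) + \frac{(m-1)(q^2-1)}{2} \;=\; \frac{q(q-1)^2}{2}. $$
The main obstacle is the bookkeeping in the Apéry enumeration: the residues of $m\cdot q$, $m\cdot(q+q_0)$ and $m\cdot(q+2q_0)$ modulo $q^2$ must be combined in a controlled way with the trivial residue contributed by $q^2+1$, and one has to argue that no smaller representative in a given residue class can be built by mixing $q^2$ with the four $m$-multiples. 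Matching the resulting Apéry sum with the Riemann--Hurwitz value above closes the argument.
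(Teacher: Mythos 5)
The paper does not actually prove this statement: it is imported verbatim from \cite{BLM21} (see also Proposition \ref{prop:principaldivisors}), so there is no internal argument to compare against; your proposal should be judged on its own. It is a viable, essentially complete proof, and its route (read off the five generators from the pole divisors of $x,y,z,t,w$, then show the generated semigroup $S$ already has $g(\Sq)$ gaps) is the natural one. All your explicit claims check out: the five integers are pole numbers at $P_\infty$ because $E_x,E_y,E_z$ are effective and avoid $P_\infty$; the identities $mq=q^2-2qq_0+q$, $m(q+q_0)=q^2-qq_0+q_0$, $m(q+2q_0)=q^2-q+2q_0$, $m(q+2q_0+1)=q^2+1$ follow from $2q_0^2=q$; and the degree-$m$ Kummer cover is tame ($m$ odd in characteristic $2$) and totally ramified exactly over the $q^2+1$ points you name, which gives $g(\Sq)=m\,g(\mathcal{S}_q)+\frac{(m-1)(q^2-1)}{2}$. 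The one step you flag as the ``main obstacle'' --- controlling the Ap\'ery set of $S=\langle q^2\rangle+mH$ with respect to $q^2$, where $H=\langle q,\,q+q_0,\,q+2q_0,\,q+2q_0+1\rangle$ is the Suzuki semigroup at infinity --- is in fact not an obstacle and needs no residue-by-residue bookkeeping with the individual generators. Since adding multiples of $q^2$ only increases an element, the least element of $S$ in the class $i \bmod q^2$ is $m\cdot\min\{s\in H : s\equiv m^{-1}i \Mod{q^2}\}$ (here $m$ is odd, hence invertible modulo $q^2$); and since every integer $\geq 2g(\mathcal{S}_q)=2q_0(q-1)$ lies in $H$ and $2q_0(q-1)<q^2$, that minimum equals $j$ if $j\in H$ and $j+q^2$ if $j$ is a gap of $H$, where $0\le j<q^2$ and $j\equiv m^{-1}i$. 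Selmer's formula then yields
\begin{equation*}
\#(\N\setminus S)\;=\;\frac{m}{q^2}\Bigl(\tfrac{q^2(q^2-1)}{2}+q^2\,g(\mathcal{S}_q)\Bigr)-\frac{q^2-1}{2}\;=\;m\,g(\mathcal{S}_q)+\frac{(m-1)(q^2-1)}{2}\;=\;g(\Sq),
\end{equation*}
exactly the target, so $S=H(P_\infty)$. In short: your strategy closes, and the only thing missing from the write-up is this short computation replacing the anticipated case analysis.
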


To the aim of determining the function $\tau_{P,P_\infty}$, we start now by computing the ring $\mc{R}(P, P_\infty)$ of regular functions outside $P$ and $P_\infty$, see \cref{def:ringregular}.

\begin{proposition}
  \label{prop:regular_functions}
  The ring of all $\F_{q^4}$-rational functions that are regular outside $P$ and $P_\infty$ is
  $$ \mc{R}(P, P_\infty) = \F_{q^4} [x, y, z, t, w, w^{-1}]. $$
\end{proposition}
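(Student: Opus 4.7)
The plan is to establish the two inclusions separately. For the inclusion $\F_{q^4}[x,y,z,t,w,w^{-1}] \subseteq \mathcal{R}(P, P_\infty)$, I would simply read off Proposition \ref{prop:principaldivisors}: each of the five functions $x, y, z, t, w$ has its polar divisor concentrated at $P_\infty$, so each lies in $\mathcal{R}(P_\infty) \subseteq \mathcal{R}(P, P_\infty)$. Moreover, since $(w) = (q^2+1)(P - P_\infty)$, the only zero of $w$ is at $P$, so $w^{-1}$ has its polar divisor concentrated at $P$ and thus also belongs to $\mathcal{R}(P,P_\infty)$. Since $\mathcal{R}(P,P_\infty)$ is an $\F_{q^4}$-algebra, the inclusion follows.

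The substantive direction is $\mathcal{R}(P,P_\infty) \subseteq \F_{q^4}[x,y,z,t,w,w^{-1}]$. My first step here is to prove the single-point analogue
\[
\mathcal{R}(P_\infty) = \F_{q^4}[x,y,z,t,w].
\]
Reading off pole orders at $P_\infty$ from Proposition \ref{prop:principaldivisors}, one has $-v_{P_\infty}(x) = q^2 - 2qq_0 + q$, $-v_{P_\infty}(y) = q^2 - qq_0 + q_0$, $-v_{P_\infty}(z) = q^2 - q + 2q_0$, $-v_{P_\infty}(t) = q^2$, and $-v_{P_\infty}(w) = q^2 + 1$, which by Proposition \ref{prop:semigroups} are precisely the generators of $H(P_\infty)$. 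Given any nonzero $f \in \mathcal{R}(P_\infty)$, its pole order $n := -v_{P_\infty}(f)$ lies in $H(P_\infty)$, so we may write $n = \alpha_1 (q^2-2qq_0+q) + \alpha_2(q^2 - qq_0+q_0) + \alpha_3(q^2-q+2q_0) + \alpha_4 q^2 + \alpha_5(q^2+1)$ with nonnegative integer $\alpha_i$. The monomial $M := x^{\alpha_1} y^{\alpha_2} z^{\alpha_3} t^{\alpha_4} w^{\alpha_5}$ has pole order exactly $n$ at $P_\infty$; subtracting the appropriate scalar multiple produces a function in $\mathcal{R}(P_\infty)$ with strictly smaller pole order, and induction on the pole order (base case $n=0$: constants) gives $f \in \F_{q^4}[x,y,z,t,w]$.

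Having established the single-point identity, I conclude as follows. Let $f \in \mathcal{R}(P, P_\infty)$ and set $k := \max(0, -v_P(f))$. Since $v_P(w) = q^2+1 > 0$, the function $fw^k$ is regular at $P$, and since $w$ is regular outside $P$ and $P_\infty$ and $f$ is as well, we get $fw^k \in \mathcal{R}(P_\infty) = \F_{q^4}[x,y,z,t,w]$ by the previous step. Multiplying back by $w^{-k}$ gives $f \in \F_{q^4}[x,y,z,t,w,w^{-1}]$, completing the proof.

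The only genuinely delicate step is the inductive argument for $\mathcal{R}(P_\infty) = \F_{q^4}[x,y,z,t,w]$; the rest is bookkeeping with the divisors from Proposition \ref{prop:principaldivisors}. That inductive step is entirely standard, relying only on the fact that the pole orders of the five chosen functions generate the full Weierstrass semigroup at $P_\infty$, which is supplied by Proposition \ref{prop:semigroups}.
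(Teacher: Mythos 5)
Your proof is correct and follows essentially the same route as the paper: both reduce to the one-point identity $\mathcal{R}(P_\infty) = \F_{q^4}[x,y,z,t,w]$ and then clear the pole at $P$ by multiplying by a suitable power of $w$. The only difference is that you spell out the standard induction on pole orders behind the one-point identity, which the paper simply asserts as a consequence of Propositions \ref{prop:semigroups} and \ref{prop:principaldivisors}.
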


\begin{proof}
  It is clear from \eqref{eq:divisors} that any function in $\F_{q^4} [x, y, z, t, w, w^{-1}]$ is regular outside $P$ and $P_\infty$. Conversely, from \eqref{eq:divisors} and \cref{prop:semigroups} it follows that the ring $\mc{R}(P_\infty)$ consisting of all $\F_{q^4}$-rational functions that are regular outside $P_\infty$ is
  $$ \mc{R}(P_\infty) = \F_{q^4} [x, y, z, t, w]. $$
  Hence, for any $f \in \mc{R}(P, P_\infty)$ there exists a suitable integer $k \geq 0$ such that $f w^k$ belongs to $\mc{R}(P_\infty)$. This shows that $f$ belongs to $\F_{q^4} [x, y, z, t, w, w^{-1}]$.
\end{proof}

\begin{lemma}
  \label{lem:period_tau}
  The period of the Weierstrass semigroup $H(P, P_\infty)$ is $q^2 + 1$.
\end{lemma}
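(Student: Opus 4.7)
The plan is to sandwich the period between the divisibility constraint coming from the explicit function $w$ and the size constraint coming from the Weierstrass semigroup at $P_\infty$.

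First, I would use Proposition \ref{prop:principaldivisors}: the principal divisor $(w)=(q^2+1)(P-P_\infty)$ shows immediately that $\pi \le q^2+1$. A standard Euclidean argument then forces $\pi$ to divide $q^2+1$: if $k(P-P_\infty)$ is principal and $k = m\pi + r$ with $0 \le r < \pi$, then $r(P-P_\infty) = k(P-P_\infty) - m\pi(P-P_\infty)$ is also principal, so by minimality of $\pi$ we get $r=0$.

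Next, I would observe that the period belongs to the one-point Weierstrass semigroup at $P_\infty$. Indeed, if $\pi(P-P_\infty)=(f)$, then $f$ has a single pole, located at $P_\infty$ with order exactly $\pi$, so $\pi \in H(P_\infty)$. From Proposition \ref{prop:semigroups}, the semigroup $H(P_\infty)$ is generated by $q^2-2qq_0+q$, $q^2-qq_0+q_0$, $q^2-q+2q_0$, $q^2$, and $q^2+1$; comparing these generators shows that the smallest nonzero element of $H(P_\infty)$ is $q^2-2qq_0+q$, so every nonzero element of $H(P_\infty)$ is at least this large.

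The main step is then to rule out every proper divisor of $q^2+1$ as a candidate for $\pi$. Since $q=2q_0^2$ is even, $q^2+1$ is odd, so its smallest prime divisor is at least $3$ and every proper divisor of $q^2+1$ is at most $(q^2+1)/3$. I would therefore check the inequality
\[
\frac{q^2+1}{3} < q^2 - 2qq_0 + q,
\]
which after substituting $q=2q_0^2$ reduces to the polynomial inequality $8q_0^4 - 12q_0^3 + 6q_0^2 - 1 > 0$. This is straightforward to verify for all $q_0 \ge 2$ (equivalently $s \ge 1$), for instance by writing the left-hand side as $4q_0^3(2q_0-3) + 6q_0^2 - 1$.

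Combining the three observations finishes the proof: $\pi$ divides $q^2+1$ and $\pi$ lies in $H(P_\infty)$, yet no proper divisor of $q^2+1$ can lie in $H(P_\infty)$ because it would be strictly smaller than the least nonzero element $q^2-2qq_0+q$ of the semigroup. Hence $\pi = q^2+1$. The only real obstacle is the numerical inequality in the last step, and this is routine.
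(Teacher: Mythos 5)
Your proof is correct, but it follows a genuinely different route from the paper's. The paper argues by contradiction: if $k(P-P_\infty)=(f)$ for some $1\le k\le q^2$, then $k\in H(P_\infty)$, and the complementary function $wf^{-1}$ shows that $q^2+1-k\in H(P_\infty)$ as well; since $k+(q^2+1-k)=q^2+1$ is smaller than twice the least nonzero element $q^2-2qq_0+q$ of $H(P_\infty)$, the two conditions are incompatible. You instead exploit the group structure of $\{k\in\mathbb{Z}\mid k(P-P_\infty)\ \text{principal}\}$ to get $\pi\mid q^2+1$, note that $\pi\in H(P_\infty)$, and then use the oddness of $q^2+1$ to bound every proper divisor by $(q^2+1)/3$, which your inequality $8q_0^4-12q_0^3+6q_0^2-1>0$ correctly shows is below the least nonzero element of the semigroup. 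Both proofs rest on the same two ingredients --- the principal divisor $(w)=(q^2+1)(P-P_\infty)$ and the minimal nonzero element of $H(P_\infty)$ --- but the paper's complementary-nongap pairing avoids any appeal to divisibility or to the arithmetic of $q^2+1$ (it rules out \emph{every} $k\in\{1,\dots,q^2\}$, not just the divisors), whereas your argument has to check less (only proper divisors) at the cost of invoking the parity of $q^2+1$. Each hinges on a numerical inequality of comparable difficulty, so neither is strictly more general; yours is a perfectly acceptable alternative.
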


\begin{proof}
  Assume by contradiction that there exists a function $f$ defined on $\Sq$ having principal divisor $(f) = k (P - P_\infty)$ for some $k \in \{ 1, \dots, q^2 \}$. Then $k$ must be a non-gap of the Weierstrass semigroup $H(P_\infty)$. The smallest non-zero element of $H(P_\infty)$ is $q^2-2qq_0+q$ (see \cref{prop:semigroups}), hence $q^2-2qq_0+q \leq k \leq q^2$ and we can write $k = q^2-2q q_0+q+j$ for some $j \in \{ 0, \dots, 2qq_0-q \}$. The function $w f^{-1}$ has principal divisor
  $$(w f^{-1}) = (q^2+1-k)(P - P_\infty) = (2qq_0-q+1-j)(P - P_\infty), $$
  hence $2qq_0-q+1-j$ must be a non-gap of the Weierstrass semigroup $H(P_\infty)$. This is not possible, as $0 < 2qq_0-q+1-j < 2qq_0-q+1 < q^2-2qq_0+q$.
\end{proof}

In the following, we denote the period of $H(P, P_\infty)$ with $\rho := q^2 + 1$.

\begin{lemma}
\label{lem:interval_i}
Let $i \in \mathbb{Z}$, $k := \left\lfloor\frac{i-1}{\rho}\right\rfloor$ and $r := i - k\rho - 1$. Then, $i$ can be written uniquely as
$$ i = (k+1)\rho - (a_t + ma_x + (q_0+1)ma_y + (2q_0+1)ma_z), $$
with $a_t, a_x, a_y, a_z \in \mathbb{Z}$ such that $0\leq a_t \leq m-1$ and
$$\begin{cases}
  0\leq a_x \leq q_0, a_y = 0, a_z = q_0 & \text{if } r < m(q_0+1), \\
  0\leq a_x \leq q_0-a_y, 0\leq a_y \leq 1, 0\leq a_z \leq q_0-1 & \text{otherwise}.
\end{cases}$$
\end{lemma}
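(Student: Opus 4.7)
The plan is to reduce the statement to a claim about nonnegative integers and then resolve it by successive Euclidean divisions. Setting $N := (k+1)\rho - i$ and using $i = k\rho + r + 1$, one obtains $N = \rho - r - 1 = q^2 - r$, so $N$ ranges over $\{0, 1, \ldots, q^2\}$ as $r$ does. The lemma therefore becomes the statement that every such $N$ admits a unique decomposition
\[
N = a_t + m a_x + m(q_0+1) a_y + m(2q_0+1) a_z
\]
under the prescribed constraints, with the condition $r < m(q_0+1)$ translating into $N > q^2 - m(q_0+1)$.

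The next step is the arithmetic identity that controls the dichotomy: using $q = 2q_0^2$, one checks directly that
\[
m(2q_0+1) q_0 \;=\; q^2 - q q_0 + q_0 \;=\; q^2 - m(q_0+1) + 1,
\]
so the first regime of the lemma corresponds exactly to $m(2q_0+1) q_0 \le N \le q^2$, and the second regime to $0 \le N < m(2q_0+1) q_0$. This identity aligns the case split of the lemma with a clean partition of the range of $N$, so that existence and uniqueness across the two regimes will follow automatically once each is treated separately.

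Each case is then handled by standard Euclidean division. In the first regime, fixing $a_y = 0$ and $a_z = q_0$ reduces the problem to writing $N' := N - m(2q_0+1) q_0 \in \{0, \ldots, m(q_0+1) - 1\}$ uniquely as $a_t + m a_x$ with $0 \le a_t \le m-1$ and $0 \le a_x \le q_0$. In the second regime, I would perform three successive divisions: first $N = a_t + m Q$ with $0 \le Q \le (2q_0+1) q_0 - 1$; then $Q = S + (2q_0+1) a_z$ with $0 \le S \le 2q_0$ and, from the bound on $Q$, $0 \le a_z \le q_0 - 1$; finally every $S \in \{0, \ldots, 2q_0\}$ is uniquely of the form $a_x + (q_0+1) a_y$ with $0 \le a_y \le 1$ and $0 \le a_x \le q_0 - a_y$, since $a_y = 0$ covers $\{0, \ldots, q_0\}$ and $a_y = 1$ covers $\{q_0+1, \ldots, 2q_0\}$. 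No conceptual obstacle arises; the only step requiring genuine care is the arithmetic identity above, which exploits the Suzuki relation $q = 2q_0^2$ to convert the threshold $m(q_0+1)$ on $r$ into the threshold $m(2q_0+1) q_0$ on $N$.
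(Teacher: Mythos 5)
Your proof is correct, and it takes a more constructive route than the paper's. The paper fixes the target interval for $r$ in each regime, proves injectivity of the parametrization by reducing the equality of two representations successively modulo $m$, $2q_0+1$ and $q_0+1$, and then concludes surjectivity by counting: the number of admissible quadruples ($m(q_0+1)$ in the first case, $m(q+q_0)$ in the second) equals the number of integers in the corresponding interval. You instead substitute $N=(k+1)\rho-i=q^2-r$, pin down the threshold via the identity $m(2q_0+1)q_0=q^2-qq_0+q_0=q^2-m(q_0+1)+1$ (which is only implicit in the paper's interval bookkeeping), and then obtain existence and uniqueness simultaneously by successive Euclidean divisions by $m$ and $2q_0+1$, finishing with the observation that $\{0,\dots,2q_0\}$ is partitioned by $a_x+(q_0+1)a_y$. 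Your argument is algorithmic --- it actually computes the quadruple $(a_t,a_x,a_y,a_z)$ from $i$, which is what one wants when implementing \cref{theorem:tau_skabelund} --- whereas the paper's injectivity-plus-cardinality argument is marginally shorter on paper but nonconstructive for the existence half. Both are elementary and fully rigorous; the only point needing care in yours, as you note, is the arithmetic identity using $q=2q_0^2$, which checks out.
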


\begin{proof}
  Observe that $k\rho+1 \leq i \leq (k+1)\rho$ and $0 \leq r \leq \rho-1$. Assume first $r < m(q_0+1)$ and notice that the condition
  $$ 0 \leq \rho-1 - (a_t + ma_x + (q_0+1)ma_y + (2q_0+1)ma_z) < m(q_0+1) $$
  holds for any choices of $a_t, a_x, a_y, a_z$ as in the assumptions. Moreover, suppose that $r$ can be expressed as:
  $$ r = \rho-1 - (a_t + ma_x + (2q_0+1)mq_0) = \rho-1 - (a_t^\prime + ma_x^\prime + (2q_0+1)mq_0) $$
  with $0\leq a_t, a_t^\prime \leq m-1$, $0\leq a_x, a_x^\prime \leq q_0$. Considering the above equation modulo $m$, it directly follows that $a_t = a_t^\prime$ and $a_x = a_x^\prime$. Hence, there are precisely $m(q_0+1)$ distinct integers in the interval $[0, m(q_0+1)-1]$, one for each possible choice of $a_t, a_x, a_y, a_z$, that can be written as $\rho-1 - (a_t + ma_x + (2q_0+1)mq_0)$. The first case of the statement follows. \\

  A similar argument can be repeated for the case $m(q_0+1) \leq r \leq \rho-1$; the condition
  $$ m(q_0+1) \leq \rho-1 - (a_t + ma_x + (q_0+1)ma_y + (2q_0+1)ma_z) \leq \rho-1 $$
  holds for any choices of $a_t, a_x, a_y, a_z$ as in the assumptions. Further, assume that $r$ can be expressed as:
  \begin{align*}
    r & = \rho-1 - (a_t + ma_x + (q_0+1)ma_y + (2q_0+1)ma_z) \\
      & = \rho-1 - (a_t^\prime + ma_x^\prime + (q_0+1)ma_y^\prime + (2q_0+1)ma_z^\prime)
  \end{align*}
  with $0\leq a_t, a_t^\prime \leq m-1$, $0\leq a_x \leq q_0-a_y, 0 \leq a_x^\prime \leq q_0-a_y^\prime, 0\leq a_y, a_y^\prime \leq 1, 0\leq a_z, a_z^\prime \leq q_0-1$. Considering the above equation first modulo $m$, then modulo $2q_0+1$, and finally modulo $q_0+1$, we conclude that $a_t = a_t^\prime, a_x = a_x^\prime, a_y = a_y^\prime, a_z = a_z^\prime$. There are $m(q+q_0)$ possible choices of $a_t, a_x, a_y, a_z$, as well as distinct integers in the interval $[m(q_0+1), \rho-1]$; we conclude that any integer in the interval $[m(q_0+1), \rho-1]$ can be expressed uniquely as $\rho-1 - (a_t + ma_x + (q_0+1)ma_y + (2q_0+1)ma_z)$ and the second case of the statement follows.
\end{proof}

\begin{theorem}
  \label{theorem:tau_skabelund}
  Let $i \in \mathbb{Z}$, $k:=\left\lfloor\frac{i-1}{\rho}\right\rfloor$ and write
\begin{equation*}
        i = (k+1)\rho - (a_t + ma_x + (q_0+1)ma_y + (2q_0+1)ma_z),
    \end{equation*}
    for a unique quadruple $(a_t, a_x, a_y, a_z) \in \mathbb{Z}^4$ such that $0\leq a_t \leq m-1$, $0\leq a_y \leq 1$, $0\leq a_x \leq q_0-a_y$ and $0\leq a_z \leq q_0$.
 Then
     $$ \tau_{P, P_\infty} (i) = (a_t q^2 + a_z (q^2-q+2q_0) + a_y (q^2-qq_0+q_0) + a_x (q^2-2qq_0+q)) - (k+1)\rho. $$
\end{theorem}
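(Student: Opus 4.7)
The plan is to establish the upper bound $\tau_{P, P_\infty}(i) \leq T(i)$ by explicit construction of a function in $\mathcal{R}(P, P_\infty)$ with the required valuations (where $T(i)$ denotes the right-hand side of the asserted identity), and then upgrade this inequality to an equality via the period-sum identity of \cref{prop:properties_tau}(4).

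For the upper bound, given $i \in \mathbb{Z}$ with its unique decomposition $(k, a_t, a_x, a_y, a_z)$ provided by \cref{lem:interval_i}, I would take
$$ f := w^{-(k+1)} x^{a_x} y^{a_y} z^{a_z} t^{a_t}. $$
By \cref{prop:regular_functions}, this $f$ lies in $\mathcal{R}(P, P_\infty)$. Using the principal divisors of \cref{prop:principaldivisors} and additivity of valuations, I compute
$$ v_P(f) = -(k+1)\rho + a_t + m a_x + m(q_0+1)a_y + m(2q_0+1)a_z, $$
which equals $-i$ precisely by the decomposition of $i$. The analogous computation at $P_\infty$ yields $-v_{P_\infty}(f) = T(i)$. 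Hence $(i, T(i)) \in H(P, P_\infty)$, and so $\tau_{P, P_\infty}(i) \leq T(i)$.

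To upgrade to equality, I observe that both $\tau_{P, P_\infty}$ (by \cref{prop:properties_tau}(3)) and $T$ (directly from the formula, since $i \mapsto i + \rho$ increases $k$ by one while leaving $(a_t, a_x, a_y, a_z)$ unchanged) satisfy $X(i+\rho) = X(i) - \rho$. It therefore suffices to verify the equality on the single fundamental domain $\{1, \ldots, \rho\}$, corresponding to $k = 0$. By \cref{prop:properties_tau}(4) with $c = 1$, $\sum_{i=1}^\rho \tau_{P, P_\infty}(i) = \rho g - \rho(\rho+1)/2$. The core of the argument is then a direct evaluation of $\sum_{i=1}^\rho T(i)$ by splitting into the two admissibility cases of \cref{lem:interval_i} and computing the partial sums of $a_t, a_x, a_y, a_z$: using $q = 2q_0^2$ and $m(q+2q_0+1) = \rho$, one checks that $\sum_{i=1}^\rho T(i) = \rho g - \rho(\rho+1)/2$ as well. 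Combined with the pointwise upper bound, matching sums force $\tau_{P, P_\infty}(i) = T(i)$ term by term.

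The main obstacle is this closed-form computation of $\sum_{i=1}^\rho T(i)$: although conceptually routine, it requires careful bookkeeping across the two admissibility regimes of \cref{lem:interval_i} and the subsequent combination of partial sums, which nevertheless collapse cleanly through polynomial identities in $q$ and $q_0$.
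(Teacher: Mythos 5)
Your proposal is correct and follows essentially the same route as the paper's proof: the same monomial $w^{-(k+1)}x^{a_x}y^{a_y}z^{a_z}t^{a_t}$ exhibits $(i,T(i))\in H(P,P_\infty)$, and \cref{prop:properties_tau}(4) upgrades the resulting pointwise inequality to an equality via the period sum, the only cosmetic difference being your choice of the fundamental domain $\{1,\dots,\rho\}$ versus the paper's window starting at $c=0$. The closed-form evaluation of $\sum T(i)$ over one period that you defer is exactly the computation the paper carries out explicitly, and it does collapse to $\pi g$ as you predict.
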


\begin{proof}
To prove the theorem, we define the map $\tilde{\tau} : \mathbb{Z} \to \mathbb{Z}$ such that $$\tilde{\tau}(i) = (a_t q^2 + a_z (q^2-q+2q_0) + a_y (q^2-qq_0+q_0) + a_x (q^2-2qq_0+q)) - (k+1)\rho$$
for all $i \in \mathbb{Z}$ and $k, a_t, a_x, a_y, a_z$ as in the assumptions, and we show that $\tilde{\tau}(i) = \tau_{P, P_\infty}(i)$ for all $i \in \mathbb{Z}$.

We start by showing that $\tilde{\tau}(i) \geq \tau_{P, P_\infty}(i)$ for all $i\in \mathbb{Z}$. To this aim, let $i \in \mathbb{Z}$, $k:=\left\lfloor\frac{i-1}{\rho}\right\rfloor$ as above and write $i$ as 
$$i = (k+1)\rho - (a_t + ma_x + (q_0+1)ma_y + (2q_0+1)ma_z),$$
for the suitable $(a_t, a_x, a_y, a_z) \in \mathbb{Z}^4$. 
Consider the function 
$$ f := w^{-(k+1)} t^{a_t} z^{a_z} y^{a_y} x^{a_x}. $$
Then, by Proposition \ref{prop:principaldivisors}, it follows that the principal divisor of $f$ is 
  \begin{alignat*}{2}
    (f) & = && -(k+1)(q^2 + 1)(P - P_\infty) + a_t\left( \sum_{a^q+a \ = \ 0 \ \wedge \ b^q+b \ = \ 0}P_{(a,b,0)} - q^2P_\infty\right) \\
    & && + a_z\left(m(2q_0+1) P + E_z - (q^2 - q + 2q_0) P_\infty\right) \\
    & && + a_y\left( m(q_0+1)P + E_y - (q^2 - q q_0 + q_0) P_\infty\right) \\
    & && + a_x\left(mP + E_x - (q^2 - 2q q_0 + q) P_\infty\right)\\
    & = && -i P + E - \tilde{\tau}(i) P_\infty,
  \end{alignat*}
  where $E$ is an effective divisor whose support do not contain $P$ and $P_\infty$. The above computation shows that $(i, \tilde{\tau}(i))$ belongs to $H(P, P_\infty)$ and thus $\tilde{\tau}(i) \geq \tau_{P, P_\infty}(i)$ by definition of $\tau_{P, P_\infty}$.

  For showing that the equality $\tilde{\tau}(i) = \tau_{P, P_\infty}(i)$ holds for all $i\in \mathbb{Z}$, we can now use Proposition \ref{prop:properties_tau}(4). In fact, we have just proved that $\tilde{\tau}(i) \geq \tau_{P, P_\infty}(i)$ for all $i \in \mathbb{Z}$ and therefore
  \begin{equation}
    \label{eq:sum_equal_pig}
    \sum_{i=c}^{\pi+c-1} (i + \tilde{\tau}(i)) \geq \sum_{i=c}^{\pi+c-1} (i + \tau_{P, P_\infty}(i)) = \pi g(\Sq)
  \end{equation}
  for all $c \in \mathbb{Z}$. To conclude, it is enough to check that the left side of equation \eqref{eq:sum_equal_pig} is equal to $\pi g(\Sq)$. We can choose $c = 0$ without loss of generality, hence we obtain
  \begin{equation*}
    \begin{split}
        &\sum_{i=0}^{\pi -1} (i + \tilde{\tau}(i)) = \\
        &= - \sum_{a_t=0}^{m-1} \sum_{a_z=0}^{q_0-1} \sum_{a_y=0}^{1} \sum_{a_x=0}^{q_0-a_y} ((a_t + ma_x + (q_0+1)ma_y + (2q_0+1)ma_z) +\\
         & - (a_t q^2 + a_z (q^2-q+2q_0) + a_y (q^2-qq_0+q_0) + a_x (q^2-2qq_0+q))) + \\
         & - \sum_{a_t=0}^{m-1} \sum_{a_x=0}^{q_0} ((a_t + ma_x + (2q_0+1)mq_0) - (a_t q^2 + q_0 (q^2-q+2q_0) + a_x (q^2-2qq_0+q)))\\
         & = - (1-q^2)\left((q_0+1)q_0\frac{(m-1)m}{2} + q_0^2\frac{(m-1)m}{2} + (q_0+1)\frac{(m-1)m}{2}\right) + \\
         &  - (m-(q^2-2qq_0+q))\left(\frac{q_0(q_0+1)}{2}q_0 m + \frac{(q_0-1)q_0}{2}q_0 m + \frac{q_0(q_0+1)}{2}m\right) + \\
         &  - ((2q_0+1)m-(q^2-q+2q_0))\left(\frac{(q_0-1)q_0}{2}(q_0+1)m + \frac{(q_0-1)q_0}{2}q_0 m + q_0(q_0+1)m\right) + \\
         &  - ((q_0+1)m-(q^2-qq_0+q_0))(q_0^2 m) = 16q_0^{10} - 16q_0^8 + 8q_0^6 - 4q_0^4 + q_0^2 = \pi g(\Sq).
    \end{split}
  \end{equation*}
\end{proof}

\begin{figure}[ht!]
    \centering

\caption{The two-point Weierstrass semigroup $H(P, P_\infty)$ of $\Sq$ for $s=1$, of period $\pi=65$. Only the pairs $(i, j) \in H(P, P_\infty)$ with $-2 \pi < i, j < 2 \pi$ are represented.}
\end{figure}

\section{Results and comparisons} \label{sec:results}

In this section, we present the results obtained for the AG codes $C_L(D, G)^\perp$ on $\Sq$ with
\begin{equation*}
    D:=\sum_{R\in \Sq(\F_{q^4})\setminus \{P,P_\infty\}}R \quad \mbox{and} \quad G:= aP + bP_\infty,
\end{equation*}
for $a,b\in \mathbb{Z}_{> 0}$ (see equation \eqref{eq:codedef}), where the parameter $s$ is set to be $s=1$. 
With this choice, we have $q_0 = 2$ and $q = 8$, so that the Skabelund curve $\Seight$ is maximal over $\F_{8^4}$ and has exactly $29185$ $\F_{8^4}$-rational points. Hence, the associated two-point AG codes have length $N := |\Seight(\F_{8^4})| - 2 = 29183$.
The order bound for the minimum distance of these codes is computed with the same algorithm as \cite{LV22}*{Algorithm 4.1}, which is inspired by \cite{TwoPointCodes}*{Algorithm 1}. Therefore, we refer to \cite{LV22}*{Section 4} for a thorough description of the algorithm and the technical details. Indeed, the assumptions that we made in Remark \ref{rem:restrictions} for computational purposes are essentially the same as those made in \cite{LV22}. In particular, this means that we only consider the codes with $a+b \leq 4g(\Seight) - 1$.

The results obtained show that in several cases the order bound significantly improves the Goppa bound. Moreover, we observed that, for all the two-point codes considered, the minimum distance is always at least that of the one-point code of the same dimension. 
In Table \ref{table:s1}, we denote with $d$ the order bound for the minimum distance of the two-point code $C_L(D, aP + bP_\infty)^\perp$ and with $d_1$ the order bound for the minimum distance of the best one-point code $C_L(D, b^\prime P_\infty)^\perp$ with the same dimension $k$. 
The table contains all the results, for $s=1$ and code length $N = 29183$, for which the difference between the estimates $d$ and $d_1$ is larger than or equal to 10. 
The four rows in bold of Table \ref{table:s1} mark the codes for which $d - d_1 = 20$, which is the largest value obtained for such difference.  

\begin{remark}
    Choosing the divisor $D$ to be $D:=\sum_{R\in \Sq(\F_{q})\setminus \{P,P_\infty\}}R$, i.e., such that its support contains precisely the points in $\Sq(\F_q)\setminus \{P,P_\infty\}$, one could in principle compute the order bound for two-point codes on $\Sq$ defined over $\F_q$, instead of $\F_{q^4}$, and hence compare such codes with two-point codes arising from the Suzuki curve and studied in \cite{TwoPointsSuzuki}. However, since the genus of $\Sq$ is much larger than $|\Sq(\F_q)|$, the order bound does not give a good estimate for the minimum distance in this case.
\end{remark}

\begin{center}
\begin{longtable}{c c c c c | c c c c c}
\caption{For $s=1$, the table contains the best possible estimates for the minimum distance $d$ and $d_1$, obtained with the order bound, for a two-point code $C_L(D, aP + bP_\infty)^\perp$ and a one-point code $C_L(D, b^\prime P_\infty)^\perp$ of a certain dimension $k$ and length $N = 29183$, respectively. 
} \label{table:s1} \\

\hline \multicolumn{1}{c}{$k$} & \multicolumn{1}{c}{$(a, b)$} & \multicolumn{1}{c}{$d$} & \multicolumn{1}{c}{$d_1$} & \multicolumn{1}{c}{$b^\prime$} & \multicolumn{1}{c}{$k$} & \multicolumn{1}{c}{$(a, b)$} & \multicolumn{1}{c}{$d$} & \multicolumn{1}{c}{$d_1$} & \multicolumn{1}{c}{$b^\prime$}\\ \hline 
\endfirsthead

\multicolumn{10}{c}%
{{\bfseries \tablename\ \thetable{} -- continued from previous page}} \\
\hline \multicolumn{1}{c}{$k$} & \multicolumn{1}{c}{$(a, b)$} & \multicolumn{1}{c}{$d$} & \multicolumn{1}{c}{$d_1$} & \multicolumn{1}{c}{$b^\prime$} & \multicolumn{1}{c}{$k$} & \multicolumn{1}{c}{$(a, b)$} & \multicolumn{1}{c}{$d$}  & \multicolumn{1}{c}{$d_1$} & \multicolumn{1}{c}{$b^\prime$}\\ \hline  
\endhead

\hline \multicolumn{10}{r}{{Continued on next page}} \\ \hline
\endfoot

\hline \hline
\endlastfoot

28860 & (1, 517) & 138 & 128 & 518 &  28933 & (1, 444) & 70 & 60 & 445 \\
28861 & (1, 516) & 138 & 128 & 517 & 28934 & (1, 443) & 70 & 60 & 444\\
28864 & (1, 513) & 134 & 124 & 514 & 28935 & (1, 442) & 70 & 60 & 443\\
28865 & (1, 512) & 134 & 124 & 513 & 28936 & (1, 441) & 70 & 60 & 442\\
28866 & (1, 511) & 134 & 124 & 512 & 28938 & (1, 439) & 65  & 50 & 440\\ 
28868 & (1, 509) & 130 & 120 & 510 &  28939 & (1, 438) & 65 & 50 & 439\\
28869 & (1, 508) & 130 & 120 & 509 &  28940 & (1, 437) & 65 & 50 & 438\\
28870 & (1, 507) & 130 & 120 & 508 &  28941 & (1, 436) & 65 & 50 & 437\\
28871 & (1, 506) & 130 & 120 & 507 &  28942 & (1, 435) & 65 & 50 & 436\\
28874 & (1, 503) & 124 & 114 & 504 &  28943 & (1, 434) & 65 & 50 & 435\\
28875 & (1, 502) & 124 & 114 & 503 &  28944 & (1, 433) & 65 & 50 & 434\\
28876 & (1, 501) & 124 & 114 & 502 &  28945 & (1, 432) & 65 & 50 & 433\\
28878 & (1, 499) & 120 & 110 & 500 &  28946 & (1, 431) & 65 & 50 & 432\\
28879 & (1, 498) & 120 & 110 & 499 & \textbf{28948} & \textbf{(6, 424)}  &\textbf{60}  & \textbf{40} & \textbf{430}\\
28880 & (1, 497) & 120&   110 & 498 & \textbf{28949} & \textbf{(6, 423)}  & \textbf{60}   & \textbf{40} & \textbf{429}\\
28881 & (1, 496) & 120&   110 & 497 & \textbf{28950} & \textbf{(6, 422)}  & \textbf{60}   & \textbf{40} & \textbf{428}\\
28884 & (1, 493) & 114&   104 & 494 & \textbf{28951} & \textbf{(6, 421)}  & \textbf{60}  & \textbf{40} & \textbf{427}\\
28885 & (1, 492) & 114&  104 & 493 & 28952 & (1, 425) & 55  & 40 & 426 \\
28886 & (1, 491) & 114&  104 & 492 & 28953 & (1, 424) & 55  & 40 & 425 \\
28888 & (1, 489) & 110&  100 & 490 & 28954 & (1, 423) & 55  & 40 & 424 \\
28889 & (1, 488) & 110&  100 & 489 & 28955 & (1, 422) & 55  & 40 & 423 \\
28890 & (1, 487) & 110&  100 & 488 & 28956 & (1, 421) & 55  & 40 & 422 \\
28891 & (1, 486) & 110&  100 & 487 & 28957 & (1, 420) & 50  & 40 & 421 \\
28898 & (1, 479) & 100&  90&   480 & 28958 & (1, 419) & 50  & 40 & 420 \\
28899 & (1, 478) & 100&  90&   479 & 28959 & (1, 418) & 50  & 40 & 419 \\
28900 & (1, 477) & 100&  90&   478 & 28960 & (1, 417) & 50  & 40 & 418 \\
28901 & (1, 476) & 100&  90&   477 & 28961 & (1, 416) & 50  & 40 & 417 \\
28908 & (1, 469) & 90 & 80 & 470  & 28978 & (11, 389) &  40  & 30 & 400\\
28909 & (1, 468) & 90 & 80 & 469  & 28979 & (11, 388) &  40  & 30 & 399\\
28910 & (1, 467) & 90 & 80 & 468  & 28980 & (11, 387) &  40  & 30 & 398\\
28911 & (1, 466) & 90 & 80 & 467  & 28981 & (11, 386) &  40  & 30 & 397\\
28923 & (1, 454) & 79 & 65 & 455 & 28997 & (56, 324) &  30 & 20 & 380\\
28924 & (1, 453) & 79 & 64 & 454 & 28998 & (56, 323) &  30 & 20 & 379\\
28925 & (1, 452) & 79 & 64 & 453 & 28999 & (56, 322) &  30 & 20 & 378\\
28926 & (1, 451) & 79 & 64 & 452 & 29000 & (56, 321) &  30 & 20 & 377\\
28927 & (1, 450) & 75 & 64 & 451 & 29001 & (56, 320) &  30 & 20 & 376\\
28928 & (1, 449) & 75 & 60 & 450 & 29002 & (56, 319) &  30 & 20 & 375\\
28929 & (1, 448) & 75 & 60 & 449 & 29003 & (56, 318) &  30 & 20 & 374\\
28930 & (1, 447) & 75 & 60 & 448 & 29004 & (56, 317) &  30 & 20 & 373\\
28931 & (1, 446) & 75 & 60 & 447 & 29005 & (56, 316) &  30 & 20 & 372\\
28932 & (1, 445) & 70 & 60 & 446
\end{longtable} 
\end{center}

\section*{Acknowledgements}
The second author is funded by the project ``Metodi matematici per la firma digitale ed il cloud computing" (Programma Operativo Nazionale (PON) ``Ricerca e Innovazione" 2014-2020, University of Perugia) and his research was partially supported  by the Italian National Group for Algebraic and Geometric Structures and their Applications (GNSAGA - INdAM).

\begin{bibdiv}
  \begin{biblist}

\bib{TwoPointCodes}{article}{
      author={Barelli, E.},
      author={Beelen, P.},
      author={Datta, M.},
      author={Neiger, V.},
      author={Rosenkilde, J.},
      title={Two-point codes for the generalized GK curve},
      journal={IEEE Transactions on Information Theory},
      volume={64},
      pages={6268-6276},
      date={2018}
    }
    
\bib{BMZ1}{article}{
    AUTHOR = {Bartoli, D.},
    author={Montanucci, M.},
    author={Zini, G.},
     TITLE = {Multi point {AG} codes on the {GK} maximal curve},
   JOURNAL = {Des. Codes Cryptogr.},
  FJOURNAL = {Designs, Codes and Cryptography. An International Journal},
    VOLUME = {86},
      YEAR = {2018},
    NUMBER = {1},
     PAGES = {161--177},
}

\bib{BMZ2}{article}{
    AUTHOR = {Bartoli, D.},
    author={Montanucci, M.},
    author={Zini, G.},
     TITLE = {A{G} codes and {AG} quantum codes from the {GGS} curve},
   JOURNAL = {Des. Codes Cryptogr.},
  FJOURNAL = {Designs, Codes and Cryptography. An International Journal},
    VOLUME = {86},
      YEAR = {2018},
    NUMBER = {10},
     PAGES = {2315--2344},
}

    \bib{OrderBound}{article}{
      author={Beelen, P.},
      title={The order bound for general algebraic geometric codes},
      journal={Finite Fields and Their Applications},
      volume={13},
      pages={665-680},
      date={2007}
    }
    
    \bib{BM}{article}{
    AUTHOR = {Beelen, P.}
    author={Montanucci, M.},
     TITLE = {A new family of maximal curves},
   JOURNAL = {J. Lond. Math. Soc. (2)},
  FJOURNAL = {Journal of the London Mathematical Society. Second Series},
    VOLUME = {98},
      YEAR = {2018},
    NUMBER = {3},
     PAGES = {573--592},
}

    \bib{BLM21}{article}{
      author={Beelen, P.},
      author={Landi, L.},
      author={Montanucci, M.},
      title={Weierstrass semigroups on the Skabelund maximal curve},
      journal={Finite Fields and Their Applications},
      volume={72},
      pages={101811},
      year={2021}
    }

    \bib{BT06}{article}{
      author={Beelen, P.},
      author={Tutas, N.},
      title={A generalization of the Weierstrass semigroup},
      journal={Journal of Pure and Applied Algebra},
      volume={207},
      pages={243-260},
      date={2006}
    }
    
\bib{TC}{article}{
    author={Castellanos, A. S.},
    AUTHOR = {Tizziotti, G.},
     TITLE = {Weierstrass semigroup and pure gaps at several points on the
              {$GK$} curve},
   JOURNAL = {Bull. Braz. Math. Soc. (N.S.)},
  FJOURNAL = {Bulletin of the Brazilian Mathematical Society. New Series.
              Boletim da Sociedade Brasileira de Matem\'{a}tica},
    VOLUME = {49},
      YEAR = {2018},
    NUMBER = {2},
     PAGES = {419--429},
}

\bib{CT}{article}{
    author={Castellanos, A. S.},
    AUTHOR = {Tizziotti, G.},
TITLE = {Two-point {AG} codes on the {GK} maximal curves},
   JOURNAL = {IEEE Trans. Inform. Theory},
  FJOURNAL = {Institute of Electrical and Electronics Engineers.
              Transactions on Information Theory},
    VOLUME = {62},
      YEAR = {2016},
    NUMBER = {2},
     PAGES = {681--686},
}

\bib{DK09}{article}{
  author = {Duursma, I. W.}
  author = {Kirov, R.},
  title = {An Extension of the Order Bound for {AG} Codes},
  booktitle = {Applied Algebra, Algebraic Algorithms and Error-correcting Codes},
  series = {Lecture Notes in Computer Science},
  publisher = {Springer},
  address = {Berlin},
  volume = {5527},
  pages = {11-22},
  year = {2009}
}


\bib{DKP11}{article}{
author = {Duursma, I. W.},
author = {Kirov, R.},
author = {Park, S.},
title = {Distance bounds for algebraic geometric codes},
journal = {Journal of Pure and Applied Algebra},
volume = {215},
pages = {1863-1878},
year = {2011}
}

\bib{DP10}{article}{
  author = {Duursma, I. W.},
  author = {Park, S.},
  title = {Coset bounds for algebraic geometric codes},
  journal = {Finite Fields and Their Applications},
  volume = {16},
  pages = {36-55},
  year = {2010},
}

\bib{GF}{article}{
    AUTHOR = {Fanali, S.}
    author={Giulietti, M.},
     TITLE = {One-point {AG} codes on the {GK} maximal curves},
   JOURNAL = {IEEE Trans. Inform. Theory},
  FJOURNAL = {Institute of Electrical and Electronics Engineers.
              Transactions on Information Theory},
    VOLUME = {56},
      YEAR = {2010},
    NUMBER = {1},
     PAGES = {202--210}
}

\bib{ggs10}{article}{
    AUTHOR = {Garcia, A.},
    author={G\"{u}neri, C.},
    author={Stichtenoth, H.},
     TITLE = {A generalization of the {G}iulietti-{K}orchm\'{a}ros maximal
              curve},
   JOURNAL = {Adv. Geom.},
  FJOURNAL = {Advances in Geometry},
    VOLUME = {10},
      YEAR = {2010},
    NUMBER = {3},
     PAGES = {427--434},
}

    \bib{ConsecutiveGaps}{article}{
      author={Garcia, A.},
      author={Kim, S. J.},
      author={Lax, R.},
      title={Consecutive Weierstrass gaps and minimum distance of Goppa codes},
      journal={Journal of Pure and Applied Algebra},
      volume={84},
      pages={199-207},
      date={1993}
    }

    \bib{GMQZ18}{article}{
      author={Giulietti, M.},
      author={Montanucci, M.},
      author={Quoos, L.},
      author={Zini, G.},
      title={On some Galois covers of the Suzuki and Ree curves},
      journal={Journal of Number Theory},
      volume={189},
      pages={220-254},
      year={2018}
    }
    
    \bib{GK}{article}{
    author={Giulietti, M.},
    author = {Korchm\'{a}ros, G.},
     title = {A new family of maximal curves over a finite field},
  journal = {Mathematische Annalen},
    volume = {343},
    NUMBER = {1},
     PAGES = {229--245},
     YEAR = {2009}
}

\bib{Goppa1}{article}{
    AUTHOR = {Goppa, V. D.},
     TITLE = {Codes on algebraic curves},
   JOURNAL = {Dokl. Akad. Nauk SSSR},
  FJOURNAL = {Doklady Akademii Nauk SSSR},
    VOLUME = {259},
      YEAR = {1981},
    NUMBER = {6},
     PAGES = {1289--1290}
}

\bib{AGcodes}{article}{
  title={Algebraic geometry codes},
  author={H{\o}holdt, T.}
  author={Van Lint, J.H.},
  author={Pellikaan, R.},
  journal={Handbook of coding theory},
  volume={1},
  number={Part 1},
  pages={871--961},
  year={1998},
  publisher={Elsevier Amsterdam}
}

\bib{KNT}{article}{
    AUTHOR = {Korchm\'{a}ros, G.},
    author={Nagy, G.P.},
    author={Timpanella, M.},
     TITLE = {Codes and gap sequences of {H}ermitian curves},
   JOURNAL = {IEEE Trans. Inform. Theory},
  FJOURNAL = {Institute of Electrical and Electronics Engineers.
              Transactions on Information Theory},
    VOLUME = {66},
      YEAR = {2020},
    NUMBER = {6},
     PAGES = {3547--3554},
}

\bib{LV22}{article}{
  author={Landi, L.},
  author={Vicino, L.},
  title={Two-point AG codes from the Beelen-Montanucci maximal curve},
  journal={Finite Fields and Their Applications},
  volume={80},
  pages={102009},
  year={2022},
  publisher={Elsevier}
}

\bib{LT}{article}{
	Author = {Lia, S.}
	author={Timpanella, M.},
	Journal = {Applicable Algebra in Engineering, Communications and Computing},
	Title = {{AG} codes from $\mathbb{F}_{q^7}$-rational points of the {GK} maximal curve},
	Year = {2021},
}

    \bib{TwoPointsSuzuki}{article}{
      author={Matthews, G. L.},
      title={Codes From the Suzuki Function Field},
      journal={IEEE Transactions On Information Theory},
      volume={50},
      pages={3298-3302},
      date={2004}
    }

\bib{MTZ}{article}{
    AUTHOR = {Montanucci, M.},
    author={Timpanella, M.},
    author={Zini, G.},
     TITLE = {A{G} codes and {AG} quantum codes from cyclic extensions of
              the {S}uzuki and {R}ee curves},
   JOURNAL = {J. Geom.},
  FJOURNAL = {Journal of Geometry},
    VOLUME = {109},
      YEAR = {2018},
    NUMBER = {1},
     PAGES = {Paper No. 23, 18},
}
    \bib{S17}{article}{
      author={Dane Skabelund},
      title={New maximal curves as ray class fields over Deligne-Lusztig curves},
      journal={Proceedings of the American Mathematical Society},
      volume={146},
      pages={525-540},
      year={2017}
    }

\bib{Stichtenoth}{book}{
      author={Stichtenoth, H.},
      title={Algebraic function fields and codes},
      series={Graduate Texts in Mathematics},
      volume={254},
      publisher={Springer},
      address={Berlin},
      date={2009}
    }
    
\bib{TTT}{article}{
    AUTHOR = {Tafazolian, S.}
    author={Teher\'{a}n-Herrera, A.}
    author={Torres, F.},
     TITLE = {Further examples of maximal curves which cannot be covered by
              the {H}ermitian curve},
   JOURNAL = {J. Pure Appl. Algebra},
  FJOURNAL = {Journal of Pure and Applied Algebra},
    VOLUME = {220},
      YEAR = {2016},
    NUMBER = {3},
     PAGES = {1122--1132},
}

  \end{biblist}
\end{bibdiv}

\end{document}